\documentclass[11pt]{article}

\usepackage[margin=1.12in]{geometry}
\usepackage{amsmath,amssymb,amsthm,mathtools}
\usepackage{bm}
\usepackage{booktabs}
\usepackage{enumitem}
\usepackage{microtype}
\usepackage{xcolor}
\usepackage{aliascnt}
\usepackage[numbers,sort&compress]{natbib}
\usepackage[colorlinks=true,linkcolor=blue!60!black,citecolor=blue!60!black,urlcolor=blue!60!black]{hyperref}
\usepackage[nameinlink,capitalise,noabbrev]{cleveref}

\newtheorem{theorem}{Theorem}[section]
\newaliascnt{proposition}{theorem}
\newtheorem{proposition}[proposition]{Proposition}
\aliascntresetthe{proposition}
\newaliascnt{lemma}{theorem}
\newtheorem{lemma}[lemma]{Lemma}
\aliascntresetthe{lemma}
\newaliascnt{corollary}{theorem}
\newtheorem{corollary}[corollary]{Corollary}
\aliascntresetthe{corollary}
\newaliascnt{assumption}{theorem}
\newtheorem{assumption}[assumption]{Assumption}
\aliascntresetthe{assumption}
\theoremstyle{definition}
\newaliascnt{definition}{theorem}

\aliascntresetthe{definition}
\theoremstyle{remark}
\newaliascnt{remark}{theorem}
\newtheorem{remark}[remark]{Remark}
\aliascntresetthe{remark}

\crefname{assumption}{assumption}{assumptions}
\Crefname{assumption}{Assumption}{Assumptions}

\newcommand{\R}{\mathbb{R}}
\newcommand{\N}{\mathbb{N}}
\newcommand{\Var}{\operatorname{Var}}

\newcommand{\1}{\mathbf{1}}
\newcommand{\dd}{\,\mathrm{d}}
\newcommand{\cF}{\mathcal{F}}

\newcommand{\cI}{\mathcal{I}}
\newcommand{\cM}{\mathcal{M}}
\newcommand{\cX}{\mathcal{X}}

\newcommand{\bracket}[1]{\left\langle #1\right\rangle}
\newcommand{\norm}[1]{\left\lVert #1\right\rVert}
\newcommand{\abs}[1]{\left\lvert #1\right\rvert}

\title{Moderate Deviations for Nonlinear Hawkes Processes}
\author{
Yingli Wang\thanks{School of Mathematical Sciences, Fudan University, Shanghai, People's Republic of China; \texttt{yingliwang@fudan.edu.cn}}
\and
Lingjiong Zhu\thanks{Department of Mathematics, Florida State University, Tallahassee, Florida, United States of America; \texttt{zhu@math.fsu.edu}}}
\date{\today}

\begin{document}
\maketitle

\begin{abstract}
A Hawkes process is a simple point process whose intensity depends on its
history; the resulting dynamics are generally non-Markovian.
We establish a sample-path moderate deviation principle for a nonlinear
Hawkes process in the full moderate regime.  Since a Poisson cluster
representation is unavailable for nonlinear Hawkes processes, we use the past
configuration as a Markov state and construct a potential, or Poisson corrector,
for the centered stochastic intensity.  A monotone Poisson coupling
shows that the add-one increment of the corrector is uniformly bounded.  The
centered counting process is consequently the sum of a martingale with bounded
jumps and an exponentially negligible boundary term.  Exponential stabilization
of the predictable quadratic variation follows from the process-level large
deviation principle for nonlinear Hawkes processes.  The martingale moderate
deviation theorem then yields the result for every scale between the central-limit
and large-deviation scales.  The same construction gives a response
formula for the asymptotic variance and, in particular, verifies that the variance
dominates the stationary mean intensity in the self-exciting case.
\end{abstract}

\medskip
\noindent\textbf{Keywords.}
Nonlinear Hawkes process; moderate deviations; martingale approximation;
Poisson equation; point process; process-level large deviations.

\medskip
\noindent\textbf{MSC 2020.}
Primary 60F10; secondary 60G55, 60G44.

\section{Introduction and main results}\label{sec:intro-main}

\subsection{Hawkes processes}\label{sec:intro-hawkes}

Let $N$ be a simple point process on $\R$, and let its natural filtration be
\begin{equation*}
  \cF_t^N
  :=\sigma\left\{N(C):C\in\mathcal B(\R),\ C\subset(-\infty,t]\right\}.
\end{equation*}
For $t\ge0$, write $N_t:=N(0,t]$.
A nonnegative predictable process $(\lambda_t)_{t\in\R}$ is called the
stochastic intensity of $N$ if, for every $a<b$ for which the expectations are
finite,
\begin{equation}\label{eq:intensity-definition-intro}
  \mathbb E\!\left[N(a,b]\mid\cF_a^N\right]
  =\mathbb E\!\left[\int_a^b\lambda_s\dd s\,\middle|\,\cF_a^N\right].
\end{equation}
Thus $\lambda_t\dd t$ is the conditional infinitesimal rate of an event at time
$t$, given the past of the process.

The classical linear Hawkes process, introduced in \citep{Hawkes1971}, is the
simple point process with intensity
\begin{equation}\label{eq:linear-intro}
  \lambda_t
  =\nu+\int_{(-\infty,t)}h(t-s)N(\dd s),
  \qquad \nu>0,
\end{equation}
where $h:\R_+\to\R_+$ is a locally integrable \textit{kernel function} (or exciting function) 
and $\nu>0$ is the \textit{baseline intensity}.  Each point at
time $s$ contributes $h(t-s)$ to the future intensity, which gives the process
its self-exciting character.  In the subcritical regime
$\int_0^\infty h(s)\dd s<1$, the immigration-birth representation of
\citep{HawkesOakes1974} realizes the process as a Poisson cluster process and
makes many asymptotic questions accessible through branching-process methods.

A nonlinear Hawkes process replaces the affine dependence in
\eqref{eq:linear-intro} by a general rate function
$\phi:\R_+\to(0,\infty)$:
\begin{equation}\label{eq:intensity-intro}
  \lambda_t=\phi(Z_{t-}),
  \qquad
  Z_{t-}:=\int_{(-\infty,t)}h(t-s)N(\dd s).
\end{equation}
The interval $(-\infty,t)$ excludes a possible point at $t$ and makes the
intensity predictable.  The linear model is recovered by taking
$\phi(z)=\nu+z$; a positive linear coefficient can be absorbed into $h$.
The functions $h$ and $\phi$ are referred to as the \textit{kernel function} (or exciting function) and
the \textit{rate function}, respectively.  The nonlinear formulation and its basic
stability theory were developed in \citep{BremMass1996}.  In this paper, $h$ is
nonnegative and $\phi$ is nondecreasing, so the model remains self-exciting;
the precise assumptions are stated below.  The cluster representation is
generally unavailable in this nonlinear setting, and the dependence on the
entire history remains a central technical difficulty.  For a unified survey of
linear and nonlinear Hawkes processes and their limit theorems, see, for
example, \citep{Zhu2013Thesis}.
The Hawkes process generalizes the Poisson process by incorporating
self-excitation and clustering, making it a versatile model with applications
in neuroscience, genomics, criminology, social networks, healthcare,
seismology, insurance, finance, and machine learning, among many other fields.
For surveys of these applications, see
\citep{Zhu2013Thesis,HawkesBook2021}.

\subsection{Large and Moderate Deviation Principles}
\label{sec:intro-deviations}

We first recall the basic large-deviation terminology.  Let $(P_n)_{n\ge1}$ be
probability measures on a topological space $E$, and let $v_n\to\infty$.  The
sequence $(P_n)$ satisfies a \textit{large deviation principle} (LDP) with speed $v_n$
and rate function $I:E\to[0,\infty]$ if $I$ is lower semicontinuous and, for
every Borel set $A\subset E$,
\begin{equation}\label{eq:ldp-definition}
  -\inf_{x\in A^\circ}I(x)
  \le \liminf_{n\to\infty}\frac{1}{v_n}\log P_n(A)
  \le \limsup_{n\to\infty}\frac{1}{v_n}\log P_n(A)
  \le -\inf_{x\in\overline A}I(x).
\end{equation}
The rate function is called good if every level set
$\{x\in E:I(x)\le c\}$ is compact.  Thus an LDP quantifies exponentially rare
events and identifies their least exponential cost.  We refer to the
foundational monograph \citep{Varadhan1984} and the systematic treatment in
\citep{DemboZeitouni2010} for the general theory and its applications.

A \textit{moderate deviation principle} (MDP) is an LDP for centered fluctuations on a
scale strictly between the central-limit and large-deviation scales.  For
example, let $S_n=\xi_1+\cdots+\xi_n$, where the $\xi_i$ are centered
independent and identically distributed random variables with variance $\sigma^2$
and satisfy a suitable exponential-moment condition.  If
\begin{equation}\label{eq:classical-moderate-scale}
  \sqrt n\ll b_n\ll n,
\end{equation}
then the laws of $S_n/b_n$ satisfy an LDP with speed $b_n^2/n$ and the Gaussian
quadratic rate function $x\mapsto x^2/(2\sigma^2)$; see
\citep[Section~3.7]{DemboZeitouni2010}.  The word ``moderate'' refers precisely
to this intermediate normalization: the deviations are larger than those seen
by the CLT but smaller than the order-$n$ deviations governed by Cram\'er's
theorem \citep{DemboZeitouni2010}.

For linear Hawkes processes, \citep{BordenaveTorrisi2007} proves
a large deviation principle with an explicit rate function,
and \citep{Karim2025} extends the result to the multivariate setting with random marks; see also the precise large-time deviation asymptotics in
\citep{GaoZhu2021Precise}.
For nonlinear Hawkes processes, \citep{Zhu2014Level3} proves a
process-level (also known as Level-3) LDP \citep{DonskerVaradhan1983} for the empirical process of time shifts, with
speed $t$ and a specific-entropy rate function.  By the contraction principle \citep{DemboZeitouni2010}, this leads to
a Level-1 LDP for $N_t/t$.  For Markovian nonlinear Hawkes processes with
exponential kernels or finite sums of exponential kernels,
\citep{Zhu2015MarkovLDP} develops an alternative spectral and variational
characterization of the Level-1 rate function.

At the central-limit scale, \citep{HawkesOakes1974} established a central
limit theorem (CLT) for linear Hawkes processes using the immigration-birth
representation; their result also implies the law of large numbers (LLN).
In the multivariate setting, \citep{BacryEtAl2013} established functional
laws of large numbers and central limit theorems with an explicit long-run
variance.  For nonlinear Hawkes processes, the law of large numbers follows
from the ergodic theorem and the ergodicity established in
\citep{BremMass1996}, while \citep{Zhu2013CLT} proved the functional CLT
\begin{equation}\label{eq:known-clt}
  \left\{\frac{N(0,tu]-\mu tu}{\sqrt{t}}:0\le u\le1\right\}
  \Longrightarrow \{\sigma B_u:0\le u\le1\},
\end{equation}
where $\mu$ is the stationary mean intensity and $\sigma^2$ is the long-run
variance.  Accordingly, the moderate scaling in the present setting is given
by a positive sequence $b_t$ satisfying
\begin{equation}\label{eq:moderate-scale}
  \frac{b_t}{\sqrt{t}}\longrightarrow\infty,
  \qquad
  \frac{b_t}{t}\longrightarrow0,
\end{equation}
as $t\rightarrow\infty$.
For this scaling, one expects $(N_t-\mu t)/b_t$ to satisfy an LDP with speed $b_t^2/t$ and the
quadratic rate inherited from the central limit theorem.  The scalar MDP was
proved for linear Hawkes processes in \citep{Zhu2013MDP}, while a functional
version was obtained through the general theory of Poisson cluster processes in
\citep{GaoWang2020}, and the multivariate extension was studied in
\citep{Yao2018}; see also the precise large-time deviation asymptotics in
\citep{GaoZhu2021Precise}.  These arguments
use the Poisson cluster structure of the linear model.  The moderate deviations
for nonlinear Hawkes processes in \citep{GaoZhu2018} instead concern a
simultaneously large rate and small excitation.  At the network level, a large
deviation principle for the mean-field limit and a moderate deviation principle
for its fluctuations were obtained in
\citep{GaoZhu2023MeanFieldLDP,GaoGaoZhu2023MeanFieldMDP}, respectively.  None
of these results gives the large-time MDP for a nonlinear
Hawkes process.

The present paper fills this gap.  We prove a functional MDP for a fixed
nonlinear Hawkes process throughout the full moderate range
$\sqrt t\ll b_t\ll t$, with speed $b_t^2/t$ and the quadratic path-space rate
function determined by the variance in \eqref{eq:known-clt}.  Since the cluster
representation is unavailable, the proof instead uses a martingale-corrector
reduction and the Level-3 LDP of \citep{Zhu2014Level3}.

\begin{table}[t]
  \caption{Summary of large-time limit theorems for Hawkes processes.}
  \label{tab:ET0p01}
  \centering
  \small
  \begin{tabular}{@{}lcc@{}}
    \toprule
    \textbf{Limit theorems} & \textbf{Linear Hawkes processes}
      & \textbf{Nonlinear Hawkes processes} \\
    \midrule
    LLN & \citep{HawkesOakes1974,BacryEtAl2013} & \citep{BremMass1996} \\
    CLT & \citep{HawkesOakes1974,BacryEtAl2013} & \citep{Zhu2013CLT} \\
    LDP & \citep{BordenaveTorrisi2007,Karim2025}
      & \citep{Zhu2014Level3,Zhu2015MarkovLDP} \\
    MDP & \citep{Zhu2013MDP,Yao2018} & Our paper \\
    \bottomrule
  \end{tabular}
\end{table}

Several extensions and alternative asymptotic scalings complement this
CLT--MDP--LDP picture.  For linear Hawkes processes with random marks, a central
limit theorem and a large deviation principle were proved in
\citep{KarabashZhu2015}, and a moderate deviation principle was established in
\citep{Seol2017}.
For Markovian Hawkes processes with an exponential
kernel, the regime of a large initial intensity was treated at the law of large
numbers and fluctuation scales in \citep{GaoZhu2018InitialLimits}, and at the
large-deviation scale in \citep{GaoZhu2018InitialLDP}.  Functional central limit
theorems under a large-baseline scaling, together with applications to
infinite-server queues, were obtained in \citep{GaoZhu2018Queue}.  Moderate and
large deviations for a generalized linear Hawkes model with generation-dependent
exciting functions were studied in \citep{MehrdadZhu2025}.

There are several general MDP criteria for dependent sequences.  The projective
criteria in \citep{DedeckerEtAl2009} yield a functional result for bounded
stationary variables, while the results in \citep{MerlevedePeligradRio2011}
allow unbounded variables with semiexponential tails under quantitative weak dependence.  The
unit increments $N(k-1,k]-\mu$ are unbounded.  Moreover, applying the latter
criterion with the natural exponential tail and exponential coupling bounds gives
only the restricted range $\sqrt n\ll b_n\ll n^{2/3}$, even when the kernel
has an exponential moment.  The full range in
\eqref{eq:moderate-scale} requires an argument that uses the point-process
structure more directly.

Our approach is based on three observations.  First, although the scalar
excitation is generally non-Markovian, the full translated history, viewed as
a state in the infinite-dimensional configuration space $\cX$ defined in
\Cref{sec:coupling}, is a Markov process.
Second, the Poisson equation for the centered intensity has a corrector whose add-one
increment is uniformly bounded by a subcritical response series.  The use of
Poisson equations to construct martingale approximations for additive
functionals of Markov processes is classical; see \citep{GlynnMeyn1996}.  Third,
the resulting martingale has bounded jumps, so the functional MDP in
\citep{Dembo1996} applies as soon as its predictable quadratic variation
stabilizes exponentially.  The last requirement follows by contracting the
process-level LDP in \citep{Zhu2014Level3}.

\subsection{Model and assumptions}\label{sec:model}

Let $N$ be a simple point process on $\R$, adapted to its natural filtration
$(\cF_t)_{t\in\R}$.  For an interval $I$, write $N(I)$ for the number of
points in $I$, and set $N_t:=N(0,t]$ for $t\ge0$.  The process has predictable
intensity
\begin{equation}\label{eq:intensity}
  \lambda_t:=\phi(Z_{t-}),
  \qquad
  Z_{t-}:=\int_{(-\infty,t)}h(t-s)N(\dd s).
\end{equation}

We work under the following assumptions.  They keep the main argument
transparent; their distinct roles and possible relaxations are discussed after
the main results and in \Cref{sec:conclusion}.

\begin{assumption}[Kernel function]\label{ass:kernel}
The kernel function $h:\R_+\to\R_+$ is continuous, nonincreasing, not identically
zero, and
\begin{equation}\label{eq:h-moments}
  \norm{h}_1:=\int_0^\infty h(s)\dd s<\infty,
  \qquad
  m_1(h):=\int_0^\infty s h(s)\dd s<\infty.
\end{equation}
\end{assumption}

\begin{assumption}[Rate function]\label{ass:rate}
The rate function $\phi:\R_+\to(0,\infty)$ is nondecreasing and globally
Lipschitz with constant $L$.  Write
$\underline\phi=\phi(0)>0$, and assume
\begin{equation}\label{eq:subcriticality}
  \rho:=L\norm{h}_1<1.
\end{equation}
In addition, the rate is sublinear:
\begin{equation}\label{eq:rate-sublinear}
  \lim_{z\to\infty}\frac{\phi(z)}{z}=0.
\end{equation}
\end{assumption}

\begin{remark}[Relation to existing assumptions]
The positivity, monotonicity, Lipschitz property, and contraction condition in
\Cref{ass:rate} are the rate-function assumptions used for the nonlinear
functional CLT in \citep{Zhu2013CLT}; the contraction condition
$L\norm{h}_1<1$ is the classical stability condition of
\citep{BremMass1996}.  The sublinear-growth condition
\eqref{eq:rate-sublinear} is the rate assumption used for the process-level
LDP in \citep{Zhu2014Level3}.
\end{remark}

Under \Cref{ass:kernel,ass:rate}, the stability result in
\citep{BremMass1996} gives a unique stationary and ergodic version of $N$.
We write $\mathbb{P}_\pi$ and $\mathbb{E}_\pi$ for its law and expectation, and
\begin{equation}\label{eq:mu}
  \mu=\mathbb{E}_\pi[N(0,1]]=\mathbb{E}_\pi[\lambda_0].
\end{equation}

Let $D[0,1]$ be the space of real c\`adl\`ag functions.  We use the uniform
topology, which is the restriction to $[0,1]$ of the locally uniform topology
in the martingale MDP in \citep{Dembo1996}.

\subsection{Main results}

We can now state the main result of our paper, a functional moderate deviation principle.  It shows that
throughout the full moderate range, the centered counting path retains the
quadratic Brownian rate function, with the nonlinear dependence entering
through the long-run variance $\sigma^2$.

\begin{theorem}[Functional moderate deviations]\label{thm:main-mdp}
Suppose \Cref{ass:kernel,ass:rate} hold and let $N$ be stationary.  Let
$b_t$ satisfy \eqref{eq:moderate-scale}.  Then
\begin{equation}\label{eq:scaled-process}
  Y_t(u)=\frac{N(0,tu]-\mu tu}{b_t},\qquad 0\le u\le1,
\end{equation}
satisfies an LDP on $D[0,1]$, equipped with the uniform topology, with speed
$b_t^2/t$ and good rate function
\begin{equation}\label{eq:rate-function}
  \cI(f)=
  \begin{cases}
    \displaystyle
    \frac{1}{2\sigma^2}\int_0^1 \left(\dot f(s)\right)^2\dd s,
      &  f\in AC[0,1],\ f(0)=0,\\[1.2ex]
    +\infty, & \text{otherwise},
  \end{cases}
\end{equation}
where $0<\sigma^2<\infty$ is the long-run variance,
\begin{equation}\label{eq:long-run-variance}
  \sigma^2
  =\lim_{t\to\infty}\frac{1}{t}\Var_\pi(N_t),
\end{equation}
and $AC[0,1]$ denotes the space of absolutely continuous functions from
$[0,1]$ to $\mathbb{R}$.  The corrector-response representation of
$\sigma^2$ is given in \eqref{eq:variance-formula} of \Cref{thm:variance}
below.
\end{theorem}

The long-run variance appearing in \eqref{eq:rate-function} admits a response
representation.  To state it, let $X_{0-}$ denote the strict-past
configuration at time zero (see \eqref{eq:history-process} for its formal definition), let $\cX^-$ denote the space of admissible
strict-past configurations (see \eqref{X:space:defn} for its formal definition) and, for a history functional $F$, set
$DF(\omega)=F(\omega+\delta_0)-F(\omega)$; the formal constructions are given
in \Cref{sec:coupling,sec:corrector}.

\begin{theorem}[Response formula for the variance]\label{thm:variance}
Under \Cref{ass:kernel,ass:rate}, let $g:\cX\to\R$ be the zero-resolvent
corrector defined in \eqref{eq:corrector}.  The absolute convergence of the
defining integral is established in \Cref{prop:corrector-bounds}.  Then
\begin{equation}\label{eq:Dg-bound-preview}
  0\le Dg(\omega)\le\frac{\rho}{1-\rho}
  \qquad \omega\in\cX^-,
\end{equation}
and
\begin{equation}\label{eq:variance-formula}
  \sigma^2
  =\mathbb{E}_\pi\!\left[
      \lambda_0\left(1+Dg(X_{0-})\right)^2
    \right].
\end{equation}
Consequently, $\sigma^2\ge\mu$.  If
\begin{equation}\label{eq:strict-condition}
  \mathbb{P}_\pi\left(\lambda_0Dg(X_{0-})>0\right)>0,
\end{equation}
then $\sigma^2>\mu$.  In particular, \eqref{eq:strict-condition} holds when
$h\not\equiv0$ and $\phi$ is strictly increasing on $\R_+$.
\end{theorem}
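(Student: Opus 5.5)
We plan to prove \Cref{thm:variance} in four steps: the corrector bound via a monotone coupling, the martingale decomposition, the variance identity, and then the inequality $\sigma^2\ge\mu$ together with its strict version.

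\textbf{Step 1: bound on $Dg$.} The zero-resolvent corrector is
\[
g(\omega)=\int_0^\infty\left(\mathbb E_\omega[\lambda_s]-\mu\right)\dd s ,
\]
with absolute convergence supplied by \Cref{prop:corrector-bounds}. Then
\[
Dg(\omega)=\int_0^\infty\left(\mathbb E_{\omega+\delta_0}[\lambda_s]-\mathbb E_\omega[\lambda_s]\right)\dd s .
\]
We realize both processes on one Poisson random measure on $\R_+\times\R_+$ by thinning. Because $\phi$ is nondecreasing and $h\ge0$, the thinning preserves inclusion, so the process started from $\omega+\delta_0$ dominates the one started from $\omega$ pathwise. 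This gives $Dg\ge0$ at once.

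For the upper bound, let $\Delta_s$ be the expected intensity difference at time $s$. The Lipschitz property gives the renewal-type inequality
\[
\Delta_s\le L\left(h(s)+\int_0^s h(s-r)\Delta_r\dd r\right).
\]
Integrating over $s\in[0,\infty)$ and iterating, the integral $\int_0^\infty\Delta_s\dd s$ is bounded by the series $\sum_{k\ge1}\rho^k=\rho/(1-\rho)$.

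\textbf{Step 2: martingale decomposition.} Using the Markov property of $X_t$ on $\cX$, the Poisson equation reads $\mathcal Lg=-(\lambda-\mu)$, where the generator acts by $\mathcal LF(\omega)=\text{drift}+\phi(\cdot)DF$. By Dynkin's formula,
\[
N_t-\mu t=M_t+g(X_0)-g(X_t),
\]
with
\[
M_t=\int_0^t\left(1+Dg(X_{s-})\right)\left(N(\dd s)-\lambda_s\dd s\right).
\]
The point is that the jump of $N_t+g(X_t)$ at an event equals $1+Dg(X_{s-})$. To justify Dynkin's formula rigorously, we plan to work with the truncated corrector $g_T(\omega)=\int_0^T(\cdots)$ and pass to the limit $T\to\infty$ using the bounds of \Cref{prop:corrector-bounds}.

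\textbf{Step 3: variance identity.} The predictable quadratic variation is
\[
\langle M\rangle_t=\int_0^t\lambda_s\left(1+Dg(X_{s-})\right)^2\dd s .
\]
By stationarity, $\mathbb E_\pi\langle M\rangle_t=t\,\mathbb E_\pi[\lambda_0(1+Dg(X_{0-}))^2]$. The boundary term satisfies $\sup_t\mathbb E_\pi[g(X_t)^2]<\infty$; we expect this from $|g(\omega)|$ being controlled by a linear functional of the past configuration with finite second moment, for which we use $m_1(h)<\infty$. Hence
\[
\frac{1}{t}\Var_\pi(N_t)=\frac{1}{t}\mathbb E_\pi[M_t^2]+O\!\left(t^{-1/2}\right),
\]
by Cauchy--Schwarz, and this yields \eqref{eq:variance-formula}. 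The main obstacle is this second-moment control of $g$ under $\pi$, together with the rigorous verification of the Poisson equation on the infinite-dimensional state space. We plan to bound $|g(\omega)|$ through the same coupling, comparing the process started from $\omega$ with the one started from the empty past and then averaging over $\pi$.

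\textbf{Step 4: $\sigma^2\ge\mu$ and strictness.} Since $Dg\ge0$, we have $(1+Dg)^2\ge1$, so
\[
\sigma^2\ge\mathbb E_\pi[\lambda_0]=\mu .
\]
Equality would force $\lambda_0Dg(X_{0-})=0$ almost surely, which contradicts \eqref{eq:strict-condition}.

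For the final claim, take $\phi$ strictly increasing and $h\not\equiv0$. Since $h$ is nonincreasing, $h>0$ on some interval $[0,\varepsilon)$. Adding $\delta_0$ therefore strictly raises $Z_s$ for $s\in(0,\varepsilon)$, and hence strictly raises the intensity $\phi(Z_s)$ on a time set of positive measure. In the coupling this makes the extra offspring probability positive, so $\Delta_s>0$ there, giving $Dg(\omega)>0$ for every $\omega$. Combined with $\lambda_0\ge\underline\phi>0$, condition \eqref{eq:strict-condition} holds.
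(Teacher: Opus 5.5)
Your proposal is correct and follows essentially the paper's argument. The monotone add-one coupling gives $0\le Dg\le\rho/(1-\rho)$; the corrector decomposition $N_t-\mu t=\widetilde M_t+g(X_0)-g(X_t)$ with $\bracket{\widetilde M}_t=\int_0^t\lambda_s(1+Dg(X_{s-}))^2\dd s$, a uniform $L^2$ bound on $g(X_t)$, and Cauchy--Schwarz give \eqref{eq:variance-formula}; and $Dg\ge0$, strictly positive when $\phi$ is strictly increasing, gives the inequalities.

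The differences are minor. The paper justifies Dynkin's formula through the discounted resolvents $g_\varepsilon$ rather than your truncations $g_T$; both work because $Dg_T\to Dg$ uniformly, although with $g_T$ you must also check that the extra drift $P_Tf$ vanishes in $L^1(\mathbb P_\pi)$, which the same coupling gives. The boundary bound you call the main obstacle is already supplied by the exponential moments \eqref{eq:g-uniform-exp} in \Cref{prop:corrector-bounds}. Those come from $\abs{g}\le C_0+C_1V$ together with domination by a stable linear Hawkes process, not from $m_1(h)<\infty$ alone.
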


Formula \eqref{eq:variance-formula} has a direct response interpretation: the
unit term represents the event itself, whereas $Dg(X_{0-})$ represents the
expected cumulative increase in future counts caused by that event.  Thus the
comparison with $\mu$ quantifies the variance inflation due to self-excitation;
the linear specialization is discussed in \Cref{rem:linear-consistency}.

The stationary formulation is convenient for the use of the empirical
process.  The following corollary gives the corresponding result for a process
started with no points before time zero.

\begin{corollary}[Empty history]\label{cor:empty}
Under \Cref{ass:kernel,ass:rate}, let $N^\varnothing$ be the nonlinear Hawkes
process with an empty history on $(-\infty,0]$.  With the stationary centering
$\mu t$, the processes
\begin{equation*}
  \left\{
  \frac{N^\varnothing(0,tu]-\mu tu}{b_t}:0\le u\le1
  \right\}
\end{equation*}
satisfy the same MDP as in \Cref{thm:main-mdp}.
\end{corollary}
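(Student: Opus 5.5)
The plan is to transfer the MDP of \Cref{thm:main-mdp} from the stationary process $N$ to $N^\varnothing$ by exponential equivalence: it suffices to show that, for every $\delta>0$,
\begin{equation*}
  \limsup_{t\to\infty}\frac{t}{b_t^2}\log\mathbb P\Bigl(\sup_{0\le u\le1}\bigl|N(0,tu]-N^\varnothing(0,tu]\bigr|>\delta b_t\Bigr)=-\infty
\end{equation*}
for a suitable coupling of $N$ and $N^\varnothing$ on one probability space. Since $\mu tu$ is the same centering in both cases, exponentially equivalent families on $D[0,1]$ with the uniform metric share the LDP; this is \citep[Theorem~4.2.13]{DemboZeitouni2010}.

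\textbf{Coupling.} I would use the monotone Poisson (thinning) coupling of \Cref{sec:coupling}. Both processes are driven by the same Poisson random measure $\Pi$ on $\R_+\times\R_+$, and a point of $\Pi$ at $(s,x)$ is accepted by a process when $x\le$ its intensity at $s$. The stationary intensity is $\phi(Z^\varnothing_{s-}+\text{excitation from the past})$, which dominates the empty-history intensity because $\phi$ is nondecreasing and $h\ge0$. By induction over accepted points, $N^\varnothing\le N$ as counting measures on $(0,\infty)$. Hence $D_t:=\sup_u|N(0,tu]-N^\varnothing(0,tu]|=N(0,t]-N^\varnothing(0,t]$, and this difference is nondecreasing in $t$.

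\textbf{Bounding the discrepancy.} Let $E:=N-N^\varnothing\ge0$ on $(0,\infty)$ and $R_s:=\int_{(-\infty,0]}h(s-r)N(\dd r)$ be the residual excitation from the stationary past. The Lipschitz property gives an intensity difference at most $L(R_s+\int_{(0,s)}h(s-r)E(\dd r))$. The total discrepancy $E(0,\infty)$ is then stochastically dominated by a subcritical cluster (branching) mechanism: immigrants arrive with intensity $LR_s$, and each discrepancy point produces offspring at rate $Lh$, with mean offspring $\rho<1$. The total progeny of a subcritical Poisson branching process with mean $\rho$ has exponential moments. For some $\theta>0$ depending only on $\rho$ (and $L$),
\begin{equation*}
  \mathbb E[e^{\theta E(0,\infty)}\mid\cF_0]\le \exp\Bigl(c\int_0^\infty R_s\dd s\Bigr),
\end{equation*}
with some adjustment of $\theta$ and $c$. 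Now $\int_0^\infty R_s\dd s=\int_{(-\infty,0]}H(-r)N(\dd r)$, where $H(x)=\int_x^\infty h$. Here $\int_0^\infty H=m_1(h)<\infty$ by \Cref{ass:kernel}.

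\textbf{Main obstacle.} The hard step is showing that $A:=\int_{(-\infty,0]}H(-r)N(\dd r)$ has a finite exponential moment under $\mathbb P_\pi$, or at least tails decaying faster than $e^{-Kb_t^2/t}$ at level $\delta b_t$. The target is weaker than it looks. Since $b_t^2/t\ll b_t$, it is enough to have $\mathbb P(A>\delta b_t/2)\le Ce^{-c' b_t}$ for some $c'>0$. The same applies to $E(0,\infty)$, because $\mathbb P(D_t>\delta b_t)\le \mathbb P(E(0,\infty)>\delta b_t)$, and then $\frac{t}{b_t^2}\cdot(-c'b_t)=-c't/b_t\to-\infty$. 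For the exponential moment of $A$, I would first try to dominate the stationary $N$ by the stationary \emph{linear} Hawkes process with rate $\phi(0)+Lz$, via the same monotone coupling. For that linear process, Laplace functionals of $\int f\,\dd N$ with $f=H(-\cdot)$, which is integrable, bounded by $\norm{h}_1$ and nonincreasing, are finite for small $\theta$ through the cluster representation, since $\rho<1$. If this domination is too delicate on $(-\infty,0]$, the fallback is to take the conditional expectation bound above in the weaker form of a Markov/Chebyshev estimate with a fixed small $\theta$. Only some exponential tail at linear scale in $b_t$ is needed, and there is room to spare.
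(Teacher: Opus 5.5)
Your argument is correct, but it runs in the opposite direction from the paper.

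In the paper, the empty-history MDP is not deduced from \Cref{thm:main-mdp}. It is the first step of that theorem's proof, because the Level-3 upper bound \eqref{eq:empty-level3-upper}, and hence \Cref{prop:bracket-concentration}, is only available under $\mathbb P_\varnothing$. The martingale MDP of \citep{Dembo1996} and \Cref{lem:boundary-negligible} are therefore applied directly to $N^\varnothing$, and the corollary's proof is a one-line pointer to that step. The stationary theorem is then obtained by exactly the transfer you propose: the monotone coupling, the branching domination of the discrepancy with immigrant mass $LV(X_0)$ (\Cref{lem:stationary-empty-coupling}), exponential moments of $V$ via the dominating stationary linear Hawkes process (\Cref{lem:past-exp}), and \citep[Theorem~4.2.13]{DemboZeitouni2010}.

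Because exponential equivalence is symmetric, your reverse transfer is valid. It has the virtue of using \Cref{thm:main-mdp} purely as a black box, so it would work however the stationary MDP was obtained. Inside the paper, though, it is circuitous (empty $\to$ stationary $\to$ empty), and the step you flag as the main obstacle is already settled by those two lemmas.

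One caution: your fallback is not really independent of the primary plan. An exponential Markov bound with fixed $\theta$ still needs $\mathbb E_\pi e^{cA}<\infty$. A merely polynomial tail for $A$ would fail near the large-deviation end of the range: for $b_t=t^{0.9}$, $(t/b_t^2)\log b_t\to0$. The linear-Hawkes domination is what actually carries the argument.
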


\begin{remark}
The contraction condition controls the response to an inserted point, whereas
sublinearity is used only in the exponential estimates that pass from the
process-level LDP to stabilization of the bracket.  In particular,
\Cref{ass:rate} allows bounded saturating rates as well as unbounded sublinear
rates.  The corrector construction and bounded-jump martingale reduction remain
valid under monotonicity, Lipschitz continuity, and $L\norm h_1<1$ without
sublinearity; what then remains missing is the bracket-stabilization argument.
Rates with positive asymptotic slope, including the classical linear rate, are
therefore not covered by \Cref{thm:main-mdp}.  The linear case can instead be
treated through its Poisson cluster representation; see
\Cref{rem:linear-consistency}.
\end{remark}

\begin{remark}[Linear-model consistency]\label{rem:linear-consistency}
Although the sublinear-growth assumption excludes the classical linear model,
the corrector formula has the correct formal specialization.  Let
$\phi(z)=\nu+z$ and $\rho=\norm h_1<1$.  With $H$ and $V$ defined in
\eqref{eq:H-tail} and \eqref{eq:past-influence}, respectively, the
zero-resolvent corrector can be computed explicitly as
\begin{equation*}
  g(\omega)
  =\frac{V(\omega)-\mu m_1(h)}{1-\rho},
  \qquad
  \mu=\frac{\nu}{1-\rho}.
\end{equation*}
Indeed, this follows by integrating the linear renewal equation for
$\mathbb E_\omega[\lambda_t]-\mu$.  Adding a point at the origin increases
$V(\omega)$ by
\begin{equation*}
  H(0)=\int_0^\infty h(s)\dd s=\rho.
\end{equation*}
Consequently,
\begin{equation*}
  Dg(\omega)=\frac{\rho}{1-\rho}=\rho+\rho^2+\cdots.
\end{equation*}
Equivalently, the inserted point has an expected number $\rho^n$ of descendants
in generation $n$, so $Dg$ is the expected total number of its descendants.
Hence $1+Dg=(1-\rho)^{-1}$, while $\mu=\nu/(1-\rho)$, and
\eqref{eq:variance-formula} becomes
\begin{equation*}
  \sigma^2=\frac{\mu}{(1-\rho)^2}
  =\frac{\nu}{(1-\rho)^3}.
\end{equation*}
This agrees with the classical (functional) CLT and and MDP for the linear Hawkes processes; see
\citep{HawkesOakes1974,BacryEtAl2013,Zhu2013MDP}.
\end{remark}

The remainder of the paper is organized as follows.
\Cref{sec:martingale-reduction} develops the response coupling, constructs the
Poisson corrector, and reduces the centered counting process to a bounded-jump
martingale up to an exponentially negligible boundary.
\Cref{sec:bracket} derives exponential stabilization of the bracket from the
process-level LDP and proves the main results.  \Cref{sec:conclusion} summarizes
the argument and records the remaining directions.

\section{Martingale-corrector reduction}\label{sec:martingale-reduction}

\subsection{The history process and response coupling}\label{sec:coupling}

We describe the Markov state and collect the coupling estimates needed later.
The state records the history immediately after a possible event at the current
time.  Thus its configurations live on $(-\infty,0]$, while the atom at the
origin is excluded from the predictable excitation.  For such a configuration
$\omega$, define
\begin{equation}\label{eq:excitation-functional}
  z(\omega):=\int_{(-\infty,0)}h(-s)\omega(\dd s),
  \qquad
  \ell(\omega):=\phi(z(\omega)).
\end{equation}
Let
\begin{equation}\label{eq:H-tail}
  H(a):=\int_a^\infty h(s)\dd s,
  \qquad a\ge0.
\end{equation}
By Fubini's theorem,
\begin{equation}\label{eq:H-integral}
  \int_0^\infty H(a)\dd a=m_1(h)<\infty.
\end{equation}
We take $\cX$ to be the space of locally finite simple point configurations
$\omega$ on $(-\infty,0]$, equipped with the vague topology and satisfying
\begin{equation}\label{eq:history-space}
  z(\omega)<\infty,
  \qquad
  \int_{(-\infty,0]}H(-s)\omega(\dd s)<\infty.
\end{equation}
We write
\begin{align}\label{X:space:defn}
\cX^-:=\left\{\omega\in\cX:\omega(\{0\})=0\right\},
\end{align}
for the
predictable states, to which the add-one operator below is applied.
The post-event and predictable history states at time $t$ are, respectively,
\begin{align}\label{eq:history-process}
  X_t:=\sum_{\tau\in N:\,\tau\le t}\delta_{\tau-t},
  \qquad
  X_{t-}:=\sum_{\tau\in N:\,\tau<t}\delta_{\tau-t}.
\end{align}
In particular, $\lambda_t=\ell(X_{t-})$.  At an event time $t$,
$X_t=X_{t-}+\delta_0$; away from event times the two states agree.
Between events, every point moves to the left at unit speed; at an event, a point
is inserted at the origin.  Thus $(X_t)_{t\ge0}$ is a piecewise deterministic
Markov process on $\cX$ in the sense of \citep{Davis1984}, even when $h$ is not
a sum of exponentials.

Following the Poisson-embedding construction of
\citep[Section~3]{BremMass1996}, let $\Pi$ be a Poisson random measure on
$\R\times\R_+$ with intensity $\dd t\dd z$.  Predictably thinning $\Pi$ below
the graph of $t\mapsto\lambda_t$ gives
\begin{equation}\label{eq:poisson-embedding}
  N(\dd t)=\int_{\R_+}\1_{\{z\le\lambda_t\}}\Pi(\dd t,\dd z).
\end{equation}
Indeed, since $\lambda_t$ is predictable, the compensator of the right-hand
side is $\lambda_t\dd t$, so the resulting point process has the prescribed
stochastic intensity.
Because $\phi$ is nondecreasing and $h$ is nonnegative,
two ordered histories can be coupled monotonically using the same $\Pi$.

Define the response resolvent
\begin{equation}\label{eq:resolvent}
  r(t):=\sum_{n=1}^\infty L^n h^{*n}(t),\qquad t\ge0,
\end{equation}
where $h^{*n}$ is the $n$-fold convolution on $\R_+$.  Subcriticality condition $\rho<1$ gives
\begin{equation}\label{eq:resolvent-moments}
  \int_0^\infty r(t)\dd t=\frac{\rho}{1-\rho},
  \qquad
  \int_0^\infty t\cdot r(t)\dd t
  =\frac{L m_1(h)}{(1-\rho)^2}.
\end{equation}
Indeed, since $h$ is nonnegative and integrable,
\begin{equation}\label{two:identities}
  \int_0^\infty h^{*n}(s)\dd s=\norm h_1^n,
  \qquad
  \int_0^\infty s\cdot h^{*n}(s)\dd s
  =n\norm h_1^{n-1}m_1(h).
\end{equation}
Summing the first identity in \eqref{two:identities} against $L^n$ gives
\[
\sum_{n=1}^{\infty}\rho^n=\frac{\rho}{1-\rho}.
\]
Similarly, summing the second identity in \eqref{two:identities} gives
\[
Lm_1(h)\sum_{n=1}^{\infty}n\rho^{n-1}=\frac{Lm_1(h)}{(1-\rho)^2}.
\]
Thus both $r\in L^1(\R_+)$ and its first moment are finite.

The resolvent $r$ sums the direct response to an inserted point and all
subsequent generations of discrepancies.  The next lemma makes this
interpretation precise and gives the uniform total-response bound used later
to control the add-one increment of the corrector.

\begin{lemma}[Response to one inserted point]\label{lem:add-one-response}
Let $N^+$ and $N$ be coupled by \eqref{eq:poisson-embedding}, with the same
history except that the history of $N^+$ contains one additional point at time
zero.  Let $D=N^+-N$ be the discrepancy process on $(0,\infty)$.  Then
\begin{equation}\label{eq:response-density}
  \frac{\dd}{\dd t}\mathbb{E}[D(0,t]]\le r(t)
  \quad\text{for almost every $t>0$},
\end{equation}
and therefore,
\begin{equation}\label{eq:response-total}
  \mathbb{E}[D(0,\infty)]\le\frac{\rho}{1-\rho}.
\end{equation}
More generally, if the additional point has age $a\ge0$ at time zero, then
\begin{equation}\label{eq:aged-response}
  \mathbb{E}[D(0,\infty)]
  \le \frac{L}{1-\rho}H(a).
\end{equation}
\end{lemma}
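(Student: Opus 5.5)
Under the coupling \eqref{eq:poisson-embedding}, monotonicity of $\phi$ and nonnegativity of $h$ give $N^+\ge N$ pathwise, so $D\ge0$. The plan is to derive a renewal-type inequality for the intensity of $D$ and to compare its solution with the resolvent $r$. Write $\lambda^+_t$ and $\lambda_t$ for the two intensities. By thinning of the same $\Pi$, $D$ is a counting process with stochastic intensity $\lambda^+_t-\lambda_t\ge0$. Hence $\frac{\dd}{\dd t}\mathbb E[D(0,t]]=\mathbb E[\lambda^+_t-\lambda_t]$ for almost every $t$. Both expectations are finite, because $\phi$ is Lipschitz and the excitation has finite mean under subcriticality.

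Let $f(t):=\mathbb E[\lambda^+_t-\lambda_t]$. For $t>0$ the two excitations differ by $h(t)$, from the extra point at $0$, plus $\int_{(0,t)}h(t-s)D(\dd s)$. The Lipschitz bound then gives
\[
\lambda^+_t-\lambda_t\le L h(t)+L\int_{(0,t)}h(t-s)D(\dd s).
\]
Taking expectations, and using that $D$ has compensator $\int(\lambda^+_s-\lambda_s)\dd s$ together with Fubini for the nonnegative integrand, yields
\[
f(t)\le Lh(t)+L\,(h*f)(t).
\]
We then iterate this inequality. After $n$ steps,
\[
f\le \sum_{k=1}^{n}L^kh^{*k}+L^n h^{*n}*f.
\]
The remainder tends to zero in $L^1$ on each $[0,T]$, since $\|L^nh^{*n}\|_1=\rho^n\to0$ provided $f\in L^1_{\mathrm{loc}}$. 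This gives $f\le r$ almost everywhere, which is \eqref{eq:response-density}. Integrating and using \eqref{eq:resolvent-moments} gives \eqref{eq:response-total}, with monotone convergence for $t\to\infty$.

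The step I expect to be most delicate is establishing a priori that $f$ is locally integrable, so that the iteration remainder is finite. I would do this with a Gronwall/renewal argument on $[0,T]$. First note $f(t)\le L\,\mathbb E[Z^+_{t-}]$. Then bound $\mathbb E[N^+(0,T]]$ by comparison with a linear Hawkes process with rate $\underline\phi+Lz$, whose mean is finite under $\rho<1$ and the integrability in \eqref{eq:history-space}. If needed, one can first truncate by stopping at the $m$-th discrepancy point and then let $m\to\infty$.

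For the aged point, suppose the extra point sits at $-a$. The same argument gives
\[
f(t)\le Lh(t+a)+L(h*f)(t),
\]
and iterating yields
\[
f\le Lh(\cdot+a)+r*\bigl(Lh(\cdot+a)\bigr).
\]
Integrating over $(0,\infty)$ gives
\[
\mathbb E[D(0,\infty)]\le LH(a)\Bigl(1+\tfrac{\rho}{1-\rho}\Bigr)=\frac{L}{1-\rho}H(a),
\]
which is \eqref{eq:aged-response}. Setting $a=0$ gives $LH(0)=\rho$ and recovers \eqref{eq:response-total}.
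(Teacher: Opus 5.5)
Your proposal is correct and follows essentially the same route as the paper. Both arguments bound the discrepancy intensity by the direct source $Lh(t)$ (respectively $Lh(t+a)$) plus $L\,h*f$. Both then iterate generation by generation to reach the resolvent $r$, and for the aged point they sum the geometric series $LH(a)\sum_{k\ge0}\rho^k$. Your version is more careful than the paper's sketch in two places: you derive the Volterra inequality explicitly through the compensator of $D$, and you check that the iteration remainder $L^nh^{*n}*f$ vanishes by establishing local integrability of $f$ (stopping at the $m$-th discrepancy point does this cleanly). The paper leaves both points implicit; it writes the Volterra form only later, in the proof of \Cref{lem:forcing-stability}.
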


\begin{proof}
The direct intensity difference caused by a point inserted at time zero
is bounded by $Lh(t)$.  Every discrepancy point at time $s$ can in turn create
an intensity discrepancy bounded by $Lh(t-s)$.  Iterating the Poisson embedding
construction, as in the stability coupling of \citep{BremMass1996}, yields
\begin{equation*}
  \frac{\dd}{\dd t}\mathbb{E}[D(0,t]]
  \le Lh(t)+L^2h^{*2}(t)+\cdots=r(t).
\end{equation*}
Integration and \eqref{eq:resolvent-moments} give
\eqref{eq:response-total}.  For a point of age $a$, the direct source is
$Lh(a+t)$.  The total mass of this source is $LH(a)$, and every generation
after the first multiplies total mass by at most $\rho$.  Summing the geometric
series proves \eqref{eq:aged-response}.
\end{proof}

The exponential moment estimate in the next lemma will control the corrector boundary.  Since
$\phi(z)\le\underline\phi+Lz$, the nonlinear process is dominated,
under the monotone Poisson coupling, by the stable linear Hawkes process with
immigration rate $\underline\phi$ and reproduction kernel $Lh$.

\begin{lemma}[Past influence has exponential moments]\label{lem:past-exp}
Set
\begin{equation}\label{eq:past-influence}
  V(\omega):=\int_{(-\infty,0]}H(-s)\omega(\dd s).
\end{equation}
If $w:\R_+\to\R_+$ is bounded and integrable, then for some
$\theta_w>0$,
\begin{equation}\label{eq:V-exp}
  \mathbb{E}_\pi\!\left[
    \exp\left\{\theta_w\int_{(-\infty,0]}w(-s)N(\dd s)\right\}
  \right]<\infty.
\end{equation}
The same estimate holds uniformly in time for the process started with an empty
history.  In particular, $V(X_t)$, $z(X_{t-})$, and the number of points in
any interval of length one have exponential moments in a neighborhood of zero,
uniformly in $t$, under both initial laws.
\end{lemma}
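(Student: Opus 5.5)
The plan is to dominate the nonlinear process by a stable linear Hawkes process and then use the cluster structure of the linear process to obtain exponential moments. By \Cref{ass:rate}, $\phi(z)\le\underline\phi+Lz$. Build, on the same Poisson random measure $\Pi$ from \eqref{eq:poisson-embedding}, the linear Hawkes process $\bar N$ with intensity $\underline\phi+\int_{(-\infty,t)}Lh(t-s)\bar N(\dd s)$. Take $\bar N$ to be the stationary version, or the version with empty history in the second case. The monotone coupling gives $N\le\bar N$ pathwise. This is proved by the usual induction over successive points: as long as $N\le\bar N$ on $(-\infty,t)$, monotonicity of $\phi$ and $h\ge0$ give $\lambda_t\le\bar\lambda_t$, so every point accepted for $N$ is also accepted for $\bar N$. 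Since $w\ge0$, it then suffices to prove \eqref{eq:V-exp} with $N$ replaced by $\bar N$.

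The stationary nonlinear law coincides with the law of the stationary solution coupled this way. This follows from the uniqueness in \citep{BremMass1996}: the stationary solution built on the stationary $\Pi$ is dominated by the stationary $\bar N$. For the empty-history process, $N^\varnothing\le\bar N^\varnothing\le\bar N^{\mathrm{stat}}$ restricted to $(0,\infty)$. This holds because adding immigrants only adds points under the coupling. Hence bounds for the stationary linear process are automatically uniform in $t$ for both initial laws, since stationarity makes the time-$t$ quantity equal in law to the time-$0$ one.

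For the stationary linear process use the Hawkes--Oakes cluster representation. Immigrants form a Poisson process of rate $\underline\phi$, and each immigrant at time $-u$ generates a cluster with offspring kernel $Lh$. Set $\Phi(u):=\sum_{\tau\in C_u}w(-\tau)$, the contribution of that cluster, where $C_u$ is the set of points of the cluster on $(-\infty,0]$. By Campbell's formula,
\[
\log\mathbb E\exp\Big\{\theta\int w(-s)\bar N(\dd s)\Big\}=\underline\phi\int_0^\infty\big(\mathbb E[e^{\theta\Phi(u)}]-1\big)\dd u .
\]
Let $S$ be the total cluster size. It has finite exponential moments for small $\theta$, because it is a Galton--Watson total progeny with Poisson($\rho$) offspring and $\rho<1$. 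Write $\|w\|_\infty$ for the bound on $w$. Then $\Phi(u)\le\|w\|_\infty S$, so $\mathbb E[e^{\theta\Phi(u)}]$ is uniformly bounded for $\theta\le\theta_0$.

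The main obstacle is integrability in $u$. For this, show that $\mathbb E[e^{\theta\Phi(u)}]-1\le C\theta\,\mathbb E[\Phi(u)\,e^{\theta\|w\|_\infty S}]$, using $e^x-1\le xe^x$. Then apply Cauchy--Schwarz and control $\mathbb E[\Phi(u)^2]^{1/2}$, whose $u$-integral need not be finite. A cleaner route is to avoid the square root. Write $\Phi(u)=\sum_{\tau\in C_u}w(-\tau)$ and use a size-biased bound: for each point of the cluster, $S$ is at most the cluster of its ancestors plus its descendants. Alternatively, bound $\Phi(u)e^{\theta\|w\|S}$ via the recursive Laplace functional of the cluster. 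The functional $G_u(\theta)=\mathbb E e^{\theta\Phi(u)}$ satisfies the branching fixed-point equation
\[
G_u=e^{\theta w(u)}\exp\Big\{\int_0^\infty Lh(s)\big(G_{u-s}-1\big)\dd s\Big\},
\]
with $G\equiv1$ for negative ages, since points after time $0$ do not contribute. Linearize it: $F_u:=G_u-1$ satisfies $F\le C\theta w+ (1+\varepsilon)Lh*F$ for small $\theta$. This uses that $F$ is small uniformly, by the bound through $S$, and that $e^x-1\le(1+\varepsilon)x$ for small $x$. Since $(1+\varepsilon)\rho<1$, iterating gives $\int F_u\dd u\le C\theta\|w\|_1/(1-(1+\varepsilon)\rho)<\infty$. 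This proves \eqref{eq:V-exp}.

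Finally, the particular cases follow by choosing $w$. For $z(X_{t-})$ take $w=h$, which is bounded because $h$ is continuous and nonincreasing. For the number of points in $(-1,0]$ take $w=\1_{[0,1]}$. For $V$ take $w=H$, which is bounded by $\|h\|_1$ and integrable by \eqref{eq:H-integral}. The bound holds uniformly in $t$ by the stationarity remark above.
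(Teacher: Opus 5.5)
Your proof is correct, and its overall structure matches the paper's. Both arguments dominate by the stationary linear process $\bar N$ with immigration rate $\underline\phi$ and kernel $Lh$. Both use the cluster representation with the Laplace functional of the immigrant Poisson process (what you call Campbell's formula) and the exponential moment of the total progeny $S$. Both take $w=h$, $w=H$, $w=\1_{[0,1]}$ and use stationarity for uniformity in $t$.

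The one genuine difference is the step you flagged as the main obstacle, and there the paper takes exactly the route you started and then abandoned. After $e^{\theta\Phi(u)}-1\le\theta\Phi(u)e^{\theta\norm{w}_\infty S}$, no Cauchy--Schwarz is needed. Write $\Phi(u)=\sum_{j:T_j\le u}w(u-T_j)$ for a single cluster with relative times $(T_j)$, whose law and size $S$ do not depend on $u$. Tonelli then gives $\int_0^\infty\Phi(u)\dd u=\sum_j\int_{T_j}^\infty w(u-T_j)\dd u=\norm{w}_1 S$. Hence $\int_0^\infty(\mathbb{E}[e^{\theta\Phi(u)}]-1)\dd u\le\theta\norm{w}_1\mathbb{E}[Se^{\theta\norm{w}_\infty S}]<\infty$ for small $\theta$.

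Your substitute is also valid. It uses the branching fixed-point equation for $G_u$, uniform smallness of $F=G-1$ via the exponential moment of $S$, the linearization $e^x-1\le(1+\varepsilon)x$, and absorption using $(1+\varepsilon)\rho<1$. The absorption is legitimate because $F$ is bounded, so $\int_0^U F<\infty$ for each finite $U$. Both bounds are $O(\theta)$ with the same leading constant $\norm{w}_1\mathbb{E}[S]=\norm{w}_1/(1-\rho)$. The paper's version lets translation invariance of the cluster do in one line what your recursion does with an extra $\varepsilon$-argument.

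One small point you could tighten; the paper is equally terse here. Induction over successive points does not literally apply to a two-sided stationary process, which has no first point. Instead, run the Picard scheme on the same $\Pi$: $N^{(0)}=0$ and $N^{(n+1)}(\dd t)=\Pi(\dd t\times[0,\lambda^{(n)}_t])$, with $\lambda^{(n)}_t=\phi(\int_{(-\infty,t)}h(t-s)N^{(n)}(\dd s))$. The iterates increase, and each lies below $\bar N$ by induction on $n$. The limit is a stationary solution, and the uniqueness in \citep{BremMass1996} identifies its law with $\mathbb{P}_\pi$.
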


\begin{proof}
Let $\bar N$ be the stationary linear Hawkes process with intensity
\begin{equation*}
  \bar\lambda_t:=\underline\phi+
    L\int_{(-\infty,t)}h(t-s)\bar N(\dd s).
\end{equation*}
Its branching ratio is $\rho<1$.  Monotonicity and
$\phi(z)\le\underline\phi+Lz$ allow the stationary nonlinear process,
and also every empty-history version, to be coupled below $\bar N$.

Use the Poisson cluster representation of $\bar N$ from
\citep{HawkesOakes1974}.  If $S$ is the total progeny of one immigrant, then $S$
is the total population of a subcritical Poisson Galton--Watson process and
therefore
$\mathbb{E}[e^{\theta S}]<\infty$ for all sufficiently small $\theta>0$.  For an
immigrant at time $-a$, let
\begin{equation*}
  W_a:=\sum_{j:T_j\le a}w(a-T_j),
\end{equation*}
where $(T_j)$ are the relative times of the points in its cluster.  Since
$W_a\le \norm w_\infty S$,
\begin{equation*}
  e^{\theta W_a}-1
  \le \theta W_a e^{\theta\norm w_\infty S},
  \qquad
  \int_0^\infty\mathbb{E}\left[W_a e^{\theta\norm w_\infty S}\right]\dd a
  =\norm w_1\mathbb{E}\left[S e^{\theta\norm w_\infty S}\right]<\infty,
\end{equation*}
for small $\theta$.  The Laplace functional of the immigrant Poisson process
\citep{DaleyVereJones2003} now proves \eqref{eq:V-exp}.  By
\Cref{ass:kernel}, the function $h$ is bounded, while
\begin{equation*}
  \norm H_\infty\le\norm h_1,
  \qquad
  \norm H_1=m_1(h)<\infty.
\end{equation*}
Hence, the preceding calculation applies to
$w=h$, $w=H$, and $w=\1_{[0,1]}$.  Taking $\theta>0$ sufficiently small, the monotone coupling with the stationary
linear process $\bar N$ and its stationarity give, under either initial law
$\mathbb P_*\in\{\mathbb P_\pi,\mathbb P_\varnothing\}$,
\begin{align*}
  \sup_{t\ge0}\mathbb E_*
  \exp\left\{\theta V(X_t)\right\}
  &\le
  \mathbb E\exp\left\{
    \theta\int_{(-\infty,0]}H(-s)\bar N(\dd s)
  \right\}<\infty,\\
  \sup_{t\ge0}\mathbb E_*
  \exp\left\{\theta z(X_{t-})\right\}
  &\le
  \mathbb E\exp\left\{
    \theta\int_{(-\infty,0]}h(-s)\bar N(\dd s)
  \right\}<\infty,\\
  \sup_{t\ge0}\mathbb E_*
  \exp\left\{\theta N(t,t+1]\right\}
  &\le
  \mathbb E\exp\left\{\theta\bar N(0,1]\right\}<\infty.
\end{align*}
These are the claimed uniform exponential-moment estimates.
\end{proof}

To transfer the moderate deviation result between the two initial laws, we
also need quantitative control of the points generated by the stationary
prehistory.  The monotone embedding and the preceding exponential-moment
bound yield the exponentially integrable coupling estimate in the following lemma.

\begin{lemma}[Stationary--empty coupling]
\label{lem:stationary-empty-coupling}
Couple a stationary process $N^\pi$ and an empty-history process
$N^\varnothing$ with the same Poisson embedding.  Then
$N^\varnothing\le N^\pi$, and the total discrepancy
\begin{equation}\label{eq:total-coupling-discrepancy}
  D_\infty=N^\pi(0,\infty)-N^\varnothing(0,\infty)
\end{equation}
has a finite exponential moment for some positive parameter.
\end{lemma}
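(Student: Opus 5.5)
The plan has three steps: establish the ordering $N^\varnothing\le N^\pi$ by monotonicity of the embedding, dominate the discrepancy by a subcritical linear branching mechanism, and then bound the exponential moment of that mechanism using \Cref{lem:past-exp}.

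\textbf{Step 1 (ordering).} Both processes are thinnings of the same $\Pi$ on $(0,\infty)\times\R_+$. The history of $N^\varnothing$ on $(-\infty,0]$ is empty, while that of $N^\pi$ is the stationary prehistory, so the histories are ordered at time $0$. Because $h\ge0$ and $\phi$ is nondecreasing, a first-discrepancy-time argument on the ordered event times of $\Pi$ propagates the ordering: as long as $N^\varnothing\le N^\pi$ on $(-\infty,t)$, the intensities satisfy $\lambda^\varnothing_t\le\lambda^\pi_t$. Every atom of $\Pi$ accepted for $N^\varnothing$ is therefore accepted for $N^\pi$. Since $\Pi$ has locally finitely many atoms below any locally bounded level, and intensities are finite by \Cref{lem:past-exp}, induction over these atoms gives $N^\varnothing\le N^\pi$ on all of $\R$.

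\textbf{Step 2 (branching domination of the discrepancy).} Write $D=N^\pi-N^\varnothing\ge0$ on $(0,\infty)$. By the Lipschitz bound,
\[
0\le\lambda^\pi_t-\lambda^\varnothing_t\le L\int_{(-\infty,0]}h(t-s)N^\pi(\dd s)+L\int_{(0,t)}h(t-s)D(\dd s).
\]
Points of $D$ are atoms of $\Pi$ lying in the band between the two intensities. Enlarging this band to the right-hand side above, I would construct on an enlarged probability space (using the same $\Pi$ restricted to the band, or an independent Poisson measure) a point process $\bar D\ge D$. This $\bar D$ is a linear Hawkes-type cluster process on $(0,\infty)$. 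It has no immigrants other than those generated by the prehistory: conditionally on $\cF_0$, the first-generation points form an inhomogeneous Poisson process with rate $Lh(t-s)$ for each prehistory point $s\le0$. Each first-generation point then generates an independent subcritical cluster with offspring kernel $Lh$ and branching ratio $\rho$. To make the domination rigorous I would define $\bar D$ generation by generation with disjoint Poisson marks, so that the stochastic domination is pathwise, just as in the proof of \Cref{lem:add-one-response}.

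\textbf{Step 3 (exponential moment).} Let $S_j$ be the total progeny, including the ancestor, of first-generation point $j$. These $S_j$ are i.i.d.\ with $\mathbb E e^{\theta S}<\infty$ for small $\theta$, as in \Cref{lem:past-exp}. Conditionally on the prehistory, the number of first-generation points is Poisson with mean $L\sum_{s\le0}H(-s)=L\,V(X_0)$, where $V$ is defined in \eqref{eq:past-influence}. Hence
\[
\mathbb E[e^{\theta D_\infty}]\le\mathbb E[e^{\theta\bar D(0,\infty)}]=\mathbb E_\pi\exp\{L\,V(X_0)(\mathbb E e^{\theta S}-1)\}.
\]
As $\theta\downarrow0$, the quantity $\mathbb E e^{\theta S}-1\to0$. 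For $\theta$ small enough, the coefficient $L(\mathbb E e^{\theta S}-1)$ falls below the parameter $\theta_H$ of \Cref{lem:past-exp} applied with $w=H$. That lemma shows $V(X_0)$ has exponential moments under $\mathbb P_\pi$, so the right-hand side is finite.

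\textbf{Main obstacle.} The delicate point is Step 2: constructing a genuine pathwise dominating cluster process from the common embedding, as opposed to the expectation-level bound used in \Cref{lem:add-one-response}. A conditional Laplace-functional bound does not directly yield an exponential moment for the thinned, non-independent discrepancy. I would first try the marked construction, in which each discrepancy point is attributed to a parent via a uniform split of the band height. If that proves awkward, I would use the alternative of dominating $N^\pi$ on $(0,\infty)$ by the stationary linear process $\bar N$ from \Cref{lem:past-exp}, and bounding $D$ by the descendants in $\bar N$ of points at times $\le0$.
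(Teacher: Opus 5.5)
Your proposal is correct and follows the paper's proof essentially step for step: the first generation of discrepancy points is dominated by a conditionally Poisson number with mean $LV(X_0)$, later generations by subcritical Galton--Watson clusters with offspring mean $\rho$, and the compound-Poisson identity $\mathbb E[e^{\theta(S_1+\cdots+S_K)}\mid V(X_0)]=\exp\{LV(X_0)(\mathbb E e^{\theta S}-1)\}$ combined with \Cref{lem:past-exp} for $w=H$ finishes the argument. The paper only asserts the domination you flag as the main obstacle, and your first option already makes it pathwise: the strip of $\Pi$ above $\lambda^\varnothing_t$ of height $L\int_{(-\infty,0]}h(t-s)N^\pi(\dd s)+L\int_{(0,t)}h(t-s)\bar D(\dd s)$ is predictable and contains the discrepancy band as long as $D\le\bar D$ on $(0,t)$, so conditionally on $\cF_0$ the thinned process $\bar D$ is exactly the linear cluster process you describe.
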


\begin{proof}
Conditional on the stationary past, the first generation of discrepancy points
is dominated by a Poisson variable with random mean
\begin{equation*}
  B_0:=L\int_{(-\infty,0)}H(-s)N(\dd s)=LV(X_0).
\end{equation*}
Each discrepancy point produces subsequent discrepancies dominated by a Poisson
Galton--Watson process with offspring mean $\rho<1$.  The total progeny $S$
of this subcritical process has a finite exponential moment for all sufficiently
small positive parameters.  If $K$, conditional on $B_0$, is Poisson with
mean $B_0$, then the total discrepancy is dominated by
$S_1+\cdots+S_K$, and
\begin{equation*}
  \mathbb{E}\left[e^{\theta\sum_{i=1}^K S_i}\mid B_0\right]
  =\exp\left\{B_0\left(\mathbb{E} e^{\theta S}-1\right)\right\}.
\end{equation*}
For small $\theta$, \Cref{lem:past-exp} makes the expectation of the
right-hand side finite, proving the claim.
\end{proof}

\subsection{Poisson corrector and martingale decomposition}\label{sec:corrector}

Let $(P_t)_{t\ge0}$ be the transition semigroup of the history process
$(X_t)_{t\ge0}$ and set
\begin{equation}\label{eq:centered-intensity}
  f(\omega):=\ell(\omega)-\mu.
\end{equation}
The desired corrector is the zero resolvent
\begin{equation}\label{eq:corrector}
  g(\omega):=\int_0^\infty P_s f(\omega)\dd s.
\end{equation}
The coupling estimates below show that this integral is absolutely convergent for
the histories reached under the stationary or empty-history law, and more
generally on $\cX$.

For a measurable functional $F:\cX\to\R$ and $\omega\in\cX^-$, define its
add-one increment by
\begin{equation}\label{eq:add-one}
  DF(\omega):=F(\omega+\delta_0)-F(\omega).
\end{equation}

The preceding response estimates control both the dependence of $g$ on the
remote past and its sensitivity to a newly inserted point.  The former will
make the corrector boundary negligible, while the latter will give uniformly
bounded jumps for the corrected martingale.

\begin{proposition}[Corrector bounds]\label{prop:corrector-bounds}
The integral in \eqref{eq:corrector} admits a version such that
\begin{equation}\label{eq:g-growth}
  \abs{g(\omega)}
  \le C_0+C_1V(\omega),
  \qquad \omega\in\cX,
\end{equation}
for finite constants $C_0,C_1$, and
\begin{equation}\label{eq:Dg-bound}
  0\le Dg(\omega)\le\frac{\rho}{1-\rho},
  \qquad \omega\in\cX^-.
\end{equation}
In particular, there exists $\theta_g>0$ such that
\begin{equation}\label{eq:g-uniform-exp}
    \sup_{t\ge0}\mathbb E_\pi
      \left[e^{\theta_g\abs{g(X_t)}}\right]<\infty,
      \quad\text{and}
\quad\sup_{t\ge0}\mathbb E_\varnothing
      \left[e^{\theta_g\abs{g(X_t)}}\right]
  <\infty.
\end{equation}
\end{proposition}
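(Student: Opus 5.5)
The plan is to obtain all three estimates from monotone couplings built on the Poisson embedding \eqref{eq:poisson-embedding}, comparing the process started from $\omega$ with a process whose law is (nearly) stationary. The key quantity is $P_sf(\omega)=\mathbb E_\omega[\lambda_s]-\mu$.

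\textbf{Step 1 (absolute convergence and the growth bound \eqref{eq:g-growth}).} Couple three processes with the same $\Pi$: $N^\omega$ started from the history $\omega$, the empty-history process $N^\varnothing$, and the stationary process $N^\pi$. Monotonicity gives $N^\varnothing\le N^\omega$ and $N^\varnothing\le N^\pi$, so
\[
|\mathbb E_\omega\lambda_s-\mathbb E_\pi\lambda_s|\le \frac{d}{ds}\mathbb E[(N^\omega-N^\varnothing)(0,s]]+\frac{d}{ds}\mathbb E[(N^\pi-N^\varnothing)(0,s]].
\]
The first term is handled pointwise with \Cref{lem:add-one-response}. By \eqref{eq:aged-response}, superposing over the points of $\omega$ shows that the discrepancy density integrates to at most $\frac{L}{1-\rho}\sum_{x\in\omega}H(-x)=\frac{L}{1-\rho}V(\omega)$. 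Superposition is legitimate because the discrepancy is subadditive when points are added one at a time along monotone couplings. The second term integrates to $\mathbb E[D_\infty]<\infty$ by \Cref{lem:stationary-empty-coupling}. Hence $\int_0^\infty|P_sf(\omega)|\,ds\le C_0+C_1V(\omega)$, with $C_1=L/(1-\rho)$ and $C_0=\mathbb E D_\infty$. This gives absolute convergence on all of $\cX$, and the version of $g$ is simply the resulting Lebesgue integral. Measurability in $\omega$ follows from Fubini.

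\textbf{Step 2 (the add-one bound \eqref{eq:Dg-bound}).} For $\omega\in\cX^-$, write
\[
Dg(\omega)=\int_0^\infty\bigl(\mathbb E_{\omega+\delta_0}\lambda_s-\mathbb E_\omega\lambda_s\bigr)\,ds .
\]
Splitting $P_sf(\omega+\delta_0)-P_sf(\omega)$ this way is justified because both integrals converge absolutely by Step 1. Under the monotone coupling of $N^+$ with $N$, the intensities are ordered pointwise, so the integrand is nonnegative, and $Dg\ge0$. The integrand equals $\frac{d}{ds}\mathbb E[D(0,s]]$. Its integral is $\mathbb E[D(0,\infty)]\le\rho/(1-\rho)$ by \eqref{eq:response-total}.

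\textbf{Step 3 (the uniform exponential moments \eqref{eq:g-uniform-exp}).} By \eqref{eq:g-growth}, $e^{\theta|g(X_t)|}\le e^{\theta C_0}e^{\theta C_1V(X_t)}$. Choosing $\theta_g=\theta/C_1$, with $\theta$ taken from \Cref{lem:past-exp} applied to $w=H$, the uniform-in-$t$ bounds of that lemma under $\mathbb P_\pi$ and $\mathbb P_\varnothing$ give the claim.

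\textbf{Main obstacle.} The main obstacle is the rigorous comparison in Step 1 between the process started from $\omega$ and the stationary law. Two points need care. First, the superposition bound requires that the expected discrepancy density is subadditive over the points of $\omega$. I would verify this through the generation-by-generation domination used in the proof of \Cref{lem:add-one-response}: each discrepancy point, whatever its source, generates at most a Poisson number of discrepancies with intensity $Lh(\cdot)$, by the Lipschitz property together with monotonicity. Second, $\mu$ must be identified with $\lim_s\mathbb E_\varnothing\lambda_s$, which holds because $\mathbb E D_\infty<\infty$ forces the discrepancy density to be integrable.
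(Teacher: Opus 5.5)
Your proposal is correct and follows essentially the paper's route: both arguments pass through the empty history using the monotone Poisson coupling (your $C_0=\mathbb E D_\infty$ is exactly $\abs{g(\varnothing)}$, and your $C_1V(\omega)$ term is the paper's point-by-point application of \eqref{eq:aged-response}), both obtain \eqref{eq:Dg-bound} from the add-one coupling together with \eqref{eq:response-total}, and both deduce \eqref{eq:g-uniform-exp} from \Cref{lem:past-exp} with $w=H$. Two small clarifications. The superposition step needs only telescoping together with the fact that \eqref{eq:aged-response} holds uniformly in the base history, not any genuine subadditivity; infinitely many past points are handled either by truncation and monotone convergence, as in the paper, or by your generation-by-generation bound with first-generation source $L\sum_{x\in\omega}h(\cdot-x)$. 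Also, identifying $\mu$ with $\lim_s\mathbb E_\varnothing\lambda_s$ is unnecessary, since you already compare directly with $\mathbb E_\pi\lambda_s=\mu$.
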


\begin{proof}
Couple processes started from $\omega+\delta_0$ and $\omega$.  Monotonicity
and the compensator identity give
\begin{align*}
  Dg(\omega)
  =\int_0^\infty
    \left(\mathbb{E}_{\omega+\delta_0}[\lambda_s]
          -\mathbb{E}_\omega[\lambda_s]\right)\dd s
          =\mathbb{E}\left[N^+(0,\infty)-N(0,\infty)\right].
\end{align*}
The quantity is nonnegative and is bounded by \eqref{eq:response-total}.

For the growth estimate, first compare a finite truncation of $\omega$ to the
empty history one point at a time and then pass to the full history by the
monotone convergence theorem.  The contribution of a past point of age $a$ to the integrated
future intensity difference is bounded by $LH(a)/(1-\rho)$, by
\eqref{eq:aged-response}.  Therefore,
\begin{equation}\label{eq:g-difference-empty}
  \abs{g(\omega)-g(\varnothing)}
  \le\frac{L}{1-\rho}V(\omega).
\end{equation}
To justify both the absolute convergence in \eqref{eq:corrector} and the
finiteness of $g(\varnothing)$, couple the empty-history process below a
stationary process.  The integral over time of their expected intensity
difference is bounded by
$L\mathbb{E}_\pi[V(X_0)]/(1-\rho)$.  This is finite by
\Cref{lem:past-exp}.  The same comparison, followed by the preceding
point-by-point bound, proves absolute convergence for every $\omega\in\cX$.
The exponential moment conclusion follows from \eqref{eq:g-growth} and
\Cref{lem:past-exp}.
\end{proof}

We next put $g$ in the extended generator domain.  This can be done without
assuming differentiability of $h$.  For $\varepsilon>0$, define the resolvent
\begin{equation}\label{eq:epsilon-resolvent}
  g_\varepsilon(\omega)
  :=\int_0^\infty e^{-\varepsilon s}P_s f(\omega)\dd s.
\end{equation}

The discounted resolvents provide approximations to the zero resolvent that
belong to the extended generator domain.  The next lemma gives the uniform
bounds and convergence needed to pass their Dynkin martingale representations
to the limit.

\begin{lemma}[Resolvent convergence and representation]
\label{lem:resolvent-convergence}
For $0<\varepsilon\le1$, the resolvents satisfy, with the same type of
constants as in \eqref{eq:g-growth},
\begin{equation}\label{eq:gepsilon-growth}
  \abs{g_\varepsilon(\omega)}\le C_0+C_1V(\omega),
  \qquad \omega\in\cX,
\end{equation}
uniformly in $\varepsilon$, and
$g_\varepsilon(\omega)\to g(\omega)$ for every $\omega\in\cX$ as
$\varepsilon\downarrow0$.  Moreover,
\begin{equation}\label{eq:Dgepsilon-convergence}
\begin{aligned}
 & 0\le Dg_\varepsilon(\omega)\le\frac{\rho}{1-\rho},
  \qquad \omega\in\cX^-,
\\
 & \sup_{\omega\in\cX^-}\abs{Dg_\varepsilon(\omega)-Dg(\omega)}
  \le\int_0^\infty(1-e^{-\varepsilon s})r(s)\dd s
  \rightarrow 0,\qquad\text{as $\varepsilon\downarrow0$}.
\end{aligned}
\end{equation}
Under either the stationary law or the empty-history law, as
$\varepsilon\downarrow0$, for each $T<\infty$,
\begin{align}
  &g_\varepsilon(X_T)\longrightarrow g(X_T),
    \qquad\text{in $L^2$},\label{eq:gepsilon-endpoint-L2}\\
  &\int_0^T\abs{g_\varepsilon(X_s)-g(X_s)}^2\dd s
    \longrightarrow0,
    \qquad\text{in $L^1$}.\label{eq:gepsilon-path-L2}
\end{align}
Finally, the square-integrable Dynkin martingale associated with
$g_\varepsilon$ has the representation
\begin{equation}\label{eq:gepsilon-martingale-representation}
\begin{aligned}
  M^{g,\varepsilon}_t
  &:=g_\varepsilon(X_t)-g_\varepsilon(X_0)
    -\int_0^t\left(\varepsilon g_\varepsilon(X_s)-f(X_s)\right)\dd s
  \\
  &=\int_{(0,t]}Dg_\varepsilon(X_{s-})
    \left(N(\dd s)-\lambda_s\dd s\right).
\end{aligned}
\end{equation}
\end{lemma}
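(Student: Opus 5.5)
The plan is to treat $g_\varepsilon$ exactly as $g$ was treated in \Cref{prop:corrector-bounds}, keeping track of the discount factor, and then to obtain the martingale representation from the piecewise deterministic structure of $(X_t)$.

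\textbf{Step 1: bounds on $g_\varepsilon$ and $Dg_\varepsilon$.} Couple the processes started from $\omega+\delta_0$ and $\omega$ through \eqref{eq:poisson-embedding}. By monotonicity and the compensator identity,
\begin{equation*}
Dg_\varepsilon(\omega)=\int_0^\infty e^{-\varepsilon s}\,\frac{\dd}{\dd s}\mathbb E[D(0,s]]\dd s .
\end{equation*}
This quantity is nonnegative. By \eqref{eq:response-density} it is bounded by $\int_0^\infty e^{-\varepsilon s}r(s)\dd s\le\rho/(1-\rho)$.

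Subtracting the same representation for $Dg$ gives
\begin{equation*}
0\le Dg-Dg_\varepsilon=\int_0^\infty(1-e^{-\varepsilon s})\,\frac{\dd}{\dd s}\mathbb E[D(0,s]]\dd s\le\int_0^\infty(1-e^{-\varepsilon s})r(s)\dd s .
\end{equation*}
The right side tends to $0$ by dominated convergence, since $r\in L^1$; in fact it is at most $\varepsilon\int_0^\infty s\,r(s)\dd s$ by \eqref{eq:resolvent-moments}.

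For the growth bound, split $P_sf(\omega)=\big(P_sf(\omega)-P_sf(\varnothing)\big)+P_sf(\varnothing)$. The first piece is handled as in \eqref{eq:g-difference-empty}: a point-by-point comparison, with weights $e^{-\varepsilon s}\le1$, gives the bound $LV(\omega)/(1-\rho)$. For the second piece, write $P_sf(\varnothing)=(P_sf(\varnothing)-\mathbb E_\pi\lambda_s)+0$. This uses the stationary--empty coupling, whose integrated intensity gap is nonnegative and bounded by $L\,\mathbb E_\pi V/(1-\rho)$, uniformly in $\varepsilon$. Pointwise convergence $g_\varepsilon\to g$ then follows by dominated convergence, because $\int_0^\infty|P_sf(\omega)|\dd s<\infty$ by the same comparison.

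\textbf{Step 2: convergence along the path.} For \eqref{eq:gepsilon-endpoint-L2}, combine the pointwise convergence with the domination $|g_\varepsilon-g|^2\le 4(C_0+C_1V)^2$. The uniform exponential moments of $V(X_T)$ from \Cref{lem:past-exp} make this dominating function integrable, so dominated convergence applies. Statement \eqref{eq:gepsilon-path-L2} follows the same way, using Fubini on $[0,T]$ and $\sup_s\mathbb E V(X_s)^2<\infty$.

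\textbf{Step 3: the Dynkin martingale (main obstacle).} This is the delicate part, because $h$ is not assumed differentiable, so $g_\varepsilon$ need not be differentiable along the flow. I would avoid the generator and work directly with the Markov property. Since $g_\varepsilon$ is a resolvent,
\begin{equation*}
g_\varepsilon(X_t)-g_\varepsilon(X_0)-\int_0^t(\varepsilon g_\varepsilon-f)(X_s)\dd s
\end{equation*}
is a martingale. To see this, compute $\mathbb E[g_\varepsilon(X_{t+u})\mid\cF_t]=P_ug_\varepsilon(X_t)$ and use the resolvent identity
\begin{equation*}
P_ug_\varepsilon=e^{\varepsilon u}\Big(g_\varepsilon-\int_0^u e^{-\varepsilon s}P_sf\dd s\Big),
\end{equation*}
together with integrability from Step 1. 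Square integrability comes from the exponential moments.

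To identify the martingale, write it as $A_t+J_t$, where $J_t=\sum_{s\le t}Dg_\varepsilon(X_{s-})\Delta N_s$ collects the jumps. The remainder $A$ changes only along the deterministic flow between events. Hence $M^{g,\varepsilon}-J$, which is $A$ minus the absolutely continuous drift, is continuous. Compensating $J$ by $\int Dg_\varepsilon(X_{s-})\lambda_s\dd s$ gives the martingale $\int Dg_\varepsilon(X_{s-})\,(N(\dd s)-\lambda_s\dd s)$, using $|Dg_\varepsilon|\le\rho/(1-\rho)$.

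The difference between the two martingales is then a continuous martingale. I would show it has finite variation. The fallback is to show directly that $t\mapsto g_\varepsilon(\theta_t\omega)$ (flow map $\theta_t$) is absolutely continuous on inter-event intervals, by conditioning on no event in $[t,t+\delta]$ in the resolvent identity. A continuous martingale of finite variation vanishes, which yields \eqref{eq:gepsilon-martingale-representation}.

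Verifying this flow regularity without smoothness of $h$ is where I expect the real work. I would try a first-jump decomposition of the resolvent,
\begin{equation*}
g_\varepsilon(\omega)=\int_0^\infty e^{-\int_0^u(\varepsilon+\ell(\theta_v\omega))\dd v}\Big(f(\theta_u\omega)+\ell(\theta_u\omega)\,g_\varepsilon(\theta_u\omega+\delta_0)\Big)\dd u .
\end{equation*}
Here $\ell(\theta_v\omega)$ is continuous in $v$ because $h$ is continuous. Taking this representation along the flow makes $g_\varepsilon(\theta_t\omega)$ explicitly absolutely continuous in $t$.
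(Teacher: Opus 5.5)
Your proposal is correct. Steps 1 and 2 match the paper's argument; your one-sided bound $0\le Dg-Dg_\varepsilon$ and the rate $\varepsilon\int_0^\infty s\,r(s)\dd s$ are small sharpenings. The real difference is in how Step 3 identifies the martingale.

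The paper obtains the Dynkin martingale from the same resolvent identity. It then invokes the martingale representation theorem for a simple point process in its natural filtration, and reads off the integrand from the jump size $Dg_\varepsilon(X_{s-})$. You bypass the representation theorem: the first-event decomposition shows that $t\mapsto g_\varepsilon(\theta_t\omega)$ is absolutely continuous along the flow, and you then discard a continuous finite-variation martingale.

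The paper's route is shorter. However, its jump identification tacitly needs two facts: $g_\varepsilon(X_u)$ does not jump between events, and its left limit at an event time is $g_\varepsilon(X_{s-})$. That is exactly what your flow regularity supplies, so your route is more self-contained.

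Your route also gives more than you use. Set $\Lambda(t)=\int_0^t(\varepsilon+\ell(\theta_v\omega))\dd v$ and $F(w)=f(\theta_w\omega)+\ell(\theta_w\omega)g_\varepsilon(\theta_w\omega+\delta_0)$. Your formula along the flow then reads
\[
  g_\varepsilon(\theta_t\omega)=e^{\Lambda(t)}\int_t^\infty e^{-\Lambda(w)}F(w)\dd w .
\]
Its derivative is a.e.\ $\varepsilon g_\varepsilon-f-\ell\,Dg_\varepsilon$ evaluated at $\theta_t\omega$. Summing over inter-event intervals yields \eqref{eq:gepsilon-martingale-representation} pathwise. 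So neither the Dynkin martingale step nor the continuous finite-variation argument is needed; the martingale property follows from the stochastic-integral form and $\abs{Dg_\varepsilon}\le\rho/(1-\rho)$.

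Two small points. First, the continuity of $M^{g,\varepsilon}-J$ that you assert early in Step 3 depends on this flow regularity, so it should come after the first-event formula. Second, you do not need continuity of $v\mapsto\ell(\theta_v\omega)$, only boundedness of $\ell$ and $F$ along the flow. This holds because $z$ and $V$ do not increase along the flow ($h$ and $H$ are nonincreasing), apart from the bounded contribution $h(0)$ of an atom at the origin.
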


\begin{proof}
The comparison used in the proof of \Cref{prop:corrector-bounds}, with the
additional factor $e^{-\varepsilon s}\le1$, gives
\begin{equation*}
  \abs{g_\varepsilon(\omega)-g_\varepsilon(\varnothing)}
  \le\frac{L}{1-\rho}V(\omega).
\end{equation*}
Coupling the empty-history process below a stationary process similarly gives
\begin{equation*}
  \sup_{0<\varepsilon\le1}
  \abs{g_\varepsilon(\varnothing)}
  \le\frac{L}{1-\rho}\mathbb{E}_\pi[V(X_0)].
\end{equation*}
This proves \eqref{eq:gepsilon-growth}.  Absolute convergence of
\eqref{eq:corrector} and dominated convergence in the time integral give the
pointwise convergence to $g$.

For $\omega\in\cX^-$, the monotone coupling and compensator identity give
\begin{equation*}
  Dg_\varepsilon(\omega)
  =\int_0^\infty e^{-\varepsilon s}
    \left(\mathbb{E}_{\omega+\delta_0}[\lambda_s]
          -\mathbb{E}_\omega[\lambda_s]\right)\dd s.
\end{equation*}
The intensity difference is nonnegative and is bounded, after integration
against time, by the response density $r$.  This proves both assertions in
\eqref{eq:Dgepsilon-convergence}; the last limit follows from the dominated
convergence theorem because $r\in L^1(\R_+)$.

The uniform growth bound and \Cref{lem:past-exp} provide, under both initial
laws, an integrable dominating function for the squared differences.  The
dominated convergence theorem therefore proves
\eqref{eq:gepsilon-endpoint-L2} and, after integration over
$[0,T]$, \eqref{eq:gepsilon-path-L2}.

Although $f$ is not bounded, it has at most linear growth in $z$, and
\eqref{eq:gepsilon-growth} gives the corresponding bound for the resolvent.
The semigroup property and Fubini's theorem yield, for $t\ge0$,
\begin{equation}\label{eq:resolvent-semigroup-identity}
  P_tg_\varepsilon-g_\varepsilon
  =\int_0^tP_s\left(\varepsilon g_\varepsilon-f\right)\dd s.
\end{equation}
Indeed,
\begin{equation*}
  P_tg_\varepsilon
  =\int_0^\infty e^{-\varepsilon u}P_{t+u}f\dd u
  =e^{\varepsilon t}
    \left(g_\varepsilon-\int_0^t e^{-\varepsilon v}P_vf\dd v\right),
\end{equation*}
and differentiating the last expression in $t$ and integrating from $0$ to
$t$ gives \eqref{eq:resolvent-semigroup-identity}.  All uses of Fubini's theorem are
justified under the stationary and empty-history laws by the growth bounds
above and the exponential moments in \Cref{lem:past-exp}.

Set $a_\varepsilon:=\varepsilon g_\varepsilon-f$.  For $0\le s\le t$, the
Markov property and \eqref{eq:resolvent-semigroup-identity} give
\begin{align*}
  &\mathbb{E}\left[\left.
    g_\varepsilon(X_t)-g_\varepsilon(X_s)
    -\int_s^t a_\varepsilon(X_u)\dd u
    \right|\cF_s\right]\\
  &\qquad=P_{t-s}g_\varepsilon(X_s)-g_\varepsilon(X_s)
    -\int_0^{t-s}P_va_\varepsilon(X_s)\dd v=0.
\end{align*}
Consequently, $\mathcal A g_\varepsilon=\varepsilon g_\varepsilon-f$ in the
extended-generator sense, and the process on the left of
\eqref{eq:gepsilon-martingale-representation} is a square-integrable
martingale.  At an event time $s$, the
state changes from $X_{s-}$ to $X_{s-}+\delta_0$, and hence its jump is
$Dg_\varepsilon(X_{s-})$.  The martingale representation theorem for a simple
point process in its natural filtration, with the initial history included in
$\cF_0$, now yields the stochastic-integral representation; see, for example,
\citep{Jacod1975} and \citep[Chapter III]{JacodShiryaev2003}.  Its square
integrability follows from
\eqref{eq:Dgepsilon-convergence} and
\begin{equation*}
  \mathbb{E}\left[\int_0^T\lambda_s\dd s\right]\le\mu T,
\end{equation*}
where equality holds under stationarity and the inequality follows by coupling
the empty-history process below the stationary process.
\end{proof}

Passing to the limit $\varepsilon\downarrow0$ in the preceding resolvent
representation identifies the martingale associated with the corrector.
Adding it to the compensated counting martingale produces the bounded-jump
martingale that drives the moderate deviation argument.

\begin{proposition}[Martingale-corrector decomposition]\label{prop:decomposition}
Under either the stationary law or the empty-history law, the process
\begin{equation}\label{eq:Mg-integral}
  M^g_t:=\int_{(0,t]}Dg(X_{s-})
    \left(N(\dd s)-\lambda_s\dd s\right)
\end{equation}
is a locally square-integrable, purely discontinuous martingale and satisfies
\begin{equation}\label{eq:g-martingale}
  M^g_t=g(X_t)-g(X_0)+\int_0^t(\lambda_s-\mu)\dd s.
\end{equation}
If
\begin{equation}\label{eq:count-martingale}
  M^N_t=N_t-\int_0^t\lambda_s\dd s,
\end{equation}
and $\widetilde M=M^N+M^g$, then
\begin{equation}\label{eq:main-decomposition}
  N_t-\mu t=\widetilde M_t+g(X_0)-g(X_t).
\end{equation}
Moreover,
\begin{equation}\label{eq:Mtilde-integral}
  \widetilde M_t
  =\int_{(0,t]}\left(1+Dg(X_{s-})\right)
    \left(N(\dd s)-\lambda_s\dd s\right),
\end{equation}
and therefore
\begin{equation}\label{eq:jump-bound}
  0<\Delta\widetilde M_s
  =1+Dg(X_{s-})\le\frac{1}{1-\rho}.
\end{equation}
\end{proposition}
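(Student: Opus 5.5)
The plan is to pass to the limit $\varepsilon\downarrow0$ in the Dynkin representation \eqref{eq:gepsilon-martingale-representation} of \Cref{lem:resolvent-convergence}, working under either initial law and on a fixed horizon $[0,T]$.

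\textbf{Step 1: the stochastic-integral side.} The integrand is predictable, since $X_{s-}$ is, and it is bounded by \eqref{eq:Dg-bound}. Together with $\mathbb E\int_0^T\lambda_s\dd s\le\mu T$, this makes $M^g$ a square-integrable martingale on $[0,T]$ with predictable quadratic variation $\int_0^t (Dg(X_{s-}))^2\lambda_s\dd s$. It is purely discontinuous because it is a compensated integral against a point process with continuous compensator. Next compare $M^{g,\varepsilon}_t$ with $M^g_t$ via the isometry:
\[
\mathbb E\Bigl[\sup_{t\le T}\abs{M^{g,\varepsilon}_t-M^g_t}^2\Bigr]
\le 4\,\mathbb E\!\int_0^T\abs{Dg_\varepsilon-Dg}^2(X_{s-})\lambda_s\dd s
\le 4\mu T\Bigl(\int_0^\infty(1-e^{-\varepsilon s})r(s)\dd s\Bigr)^2 .
\]
This tends to zero by the uniform estimate in \eqref{eq:Dgepsilon-convergence}.

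\textbf{Step 2: the Dynkin side.} Here $g_\varepsilon(X_t)\to g(X_t)$ and $g_\varepsilon(X_0)\to g(X_0)$ in $L^2$ by \eqref{eq:gepsilon-endpoint-L2}. The drift term splits as
\[
\int_0^t\varepsilon g_\varepsilon(X_s)\dd s-\int_0^t f(X_s)\dd s .
\]
The first piece is bounded in $L^1$ by $\varepsilon T\sup_s\mathbb E[C_0+C_1V(X_s)]\to0$, using \eqref{eq:gepsilon-growth} and \Cref{lem:past-exp}. Also $f(X_s)=\lambda_s-\mu$, because the state at an event time differs from $X_{s-}$ only by an atom at $0$, which $z$ ignores. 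Identifying the $L^1$ limits of the two sides at each fixed $t$ gives \eqref{eq:g-martingale} almost surely. Both sides are càdlàg, so the identity holds for all $t$ simultaneously.

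\textbf{Step 3: the decomposition and jump bound.} Adding $M^N$ to \eqref{eq:g-martingale}, the $\int\lambda_s$ terms cancel, which yields \eqref{eq:main-decomposition}. Formula \eqref{eq:Mtilde-integral} is immediate, since $M^N=\int 1\,(N(\dd s)-\lambda_s\dd s)$. The compensator part is continuous and $N$ is simple, so the jumps of $\widetilde M$ occur only at event times and equal $1+Dg(X_{s-})$. Then \eqref{eq:Dg-bound} gives $1\le 1+Dg\le 1+\rho/(1-\rho)=1/(1-\rho)$, which is \eqref{eq:jump-bound}. Local square integrability on all of $\R_+$ follows from the square integrability on every $[0,T]$.

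The main obstacle is the justification that $g$ is defined as a genuine version, consistent along the path, so that $g(X_t)$ equals the pointwise limit of $g_\varepsilon(X_t)$. This is supplied by the pointwise convergence in \Cref{lem:resolvent-convergence} on all of $\cX$. Everything else is routine $L^2$ and $L^1$ bookkeeping using the uniform exponential moments.
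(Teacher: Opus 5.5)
Your proposal is correct and follows the paper's proof. Both arguments pass to the limit $\varepsilon\downarrow0$ in the Dynkin representation of \Cref{lem:resolvent-convergence}, using the isometry with the uniform bound on $Dg_\varepsilon-Dg$, the $L^2$ endpoint convergence and the vanishing of $\varepsilon\int_0^t g_\varepsilon(X_s)\dd s$, then read off the decomposition and jump bound from \eqref{eq:Dg-bound}. Your extras (Doob's inequality, and the remark that $f(X_s)=\lambda_s-\mu$ because $z$ ignores the atom at the origin) are harmless refinements. The one soft spot, shared with the paper, is that you assert rather than prove that $t\mapsto g(X_t)$ is c\`adl\`ag, although the identity on a countable dense set of times already suffices for the later uses.
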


\begin{proof}
By \eqref{eq:Dgepsilon-convergence} and the point-process isometry, for every
$T<\infty$,
\begin{align*}
  \mathbb{E}\left[\abs{M^{g,\varepsilon}_T-M^g_T}^2\right]
  &=\mathbb{E}\left[\int_0^T
      \abs{Dg_\varepsilon(X_{s-})-Dg(X_{s-})}^2\lambda_s\dd s\right]\\
  &\le \mu T
    \sup_{\omega\in\cX^-}\abs{Dg_\varepsilon(\omega)-Dg(\omega)}^2
  \longrightarrow0.
\end{align*}
The uniform bound \eqref{eq:gepsilon-growth} and \Cref{lem:past-exp} also give
\begin{equation*}
  \mathbb{E}\left[\abs{\varepsilon\int_0^T
      g_\varepsilon(X_s)\dd s}^2\right]
  \le \varepsilon^2T\int_0^T\mathbb{E}\left[\left(g_\varepsilon(X_s)\right)^2\right]\dd s
  \le C_T\varepsilon^2
  \rightarrow 0,
\end{equation*}
as $\varepsilon\downarrow0$.
Together with \eqref{eq:gepsilon-endpoint-L2}, this allows passage to the limit
on the left side of
\eqref{eq:gepsilon-martingale-representation}.  Thus
\eqref{eq:g-martingale} holds for each $T$, and hence indistinguishably after
taking the c\`adl\`ag version supplied by \eqref{eq:Mg-integral}.  Combining
\eqref{eq:Mg-integral} with \eqref{eq:count-martingale} proves
\eqref{eq:main-decomposition} and \eqref{eq:Mtilde-integral}.  Finally,
\eqref{eq:Dg-bound} gives \eqref{eq:jump-bound}.
\end{proof}

\subsection{Exponential negligibility of the corrector boundary}

The reduction is completed by showing in the next lemma that the corrector boundary is negligible
on every moderate-deviations scale.

\begin{lemma}[Boundary exponential equivalence]\label{lem:boundary-negligible}
For every $\varepsilon>0$,
\begin{equation}\label{eq:boundary-negligible}
  \limsup_{t\to\infty}\frac{t}{b_t^2}
  \log\mathbb{P}_*\left(
    \sup_{0\le u\le1}\abs{g(X_{tu})-g(X_0)}>\varepsilon b_t
  \right)=-\infty.
\end{equation}
Here, $\mathbb{P}_*$ may be either $\mathbb{P}_\pi$ or $\mathbb{P}_\varnothing$.
\end{lemma}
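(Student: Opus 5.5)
The plan is to reduce the supremum over the continuum $[0,t]$ to a maximum over unit time intervals, and then apply a union bound with the uniform exponential moments of \Cref{prop:corrector-bounds} and \Cref{lem:past-exp}. The key point is that exponential moments at a fixed parameter give tails $e^{-\theta\varepsilon b_t}$. On the speed $b_t^2/t$ these are superexponentially small, because
\[
  \frac{t}{b_t^2}\,\theta\varepsilon b_t=\frac{\theta\varepsilon t}{b_t}\to\infty
  \qquad\text{since } b_t\ll t .
\]
The polynomial factor $t$ from the union bound is harmless, since
\[
  \frac{t}{b_t^2}\log t\to0
  \qquad\text{as } b_t\gg\sqrt t
\]
(more precisely, $\log t\ll b_t$ suffices).

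First, by the triangle inequality it suffices to bound $\mathbb P_*(\abs{g(X_0)}>\varepsilon b_t/2)$ and $\mathbb P_*(\sup_{0\le s\le t}\abs{g(X_s)}>\varepsilon b_t/2)$. The first term is handled by Markov's inequality with \eqref{eq:g-uniform-exp}, which gives a bound of $Ce^{-\theta_g\varepsilon b_t/2}$.

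For the supremum, I will use the growth bound \eqref{eq:g-growth}, $\abs{g(X_s)}\le C_0+C_1V(X_s)$, and control $V(X_s)$ uniformly over $s\in[k,k+1]$. The function $V(X_s)=\sum_{\tau\le s}H(s-\tau)$ decreases between events, because $H$ is nonincreasing and points only age. It increases only at event times, by $H(0)=\norm h_1$ per event. Hence for $s\in[k,k+1]$,
\[
  V(X_s)\le V(X_k)+\norm h_1 N(k,k+1].
\]
This deterministic pathwise bound is the main step to get right. One must check that $V$ is indeed monotone along the flow; this holds because $H$ is nonincreasing, without needing differentiability of $h$.

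Then, with a small $\theta>0$ and the uniform-in-time exponential moments of $V(X_k)$ and $N(k,k+1]$ from \Cref{lem:past-exp} (valid under both $\mathbb P_\pi$ and $\mathbb P_\varnothing$), Cauchy--Schwarz gives
\[
  \sup_k\mathbb E_*\exp\Bigl\{\theta\sup_{s\in[k,k+1]}V(X_s)\Bigr\}\le C<\infty .
\]
A union bound over $k=0,\dots,\lceil t\rceil$ then yields
\[
  \mathbb P_*\Bigl(\sup_{s\le t}\abs{g(X_s)}>\tfrac{\varepsilon b_t}{2}\Bigr)
  \le (t+1)\,C\,e^{\theta C_0/C_1}\exp\Bigl\{-\frac{\theta\varepsilon b_t}{2C_1}\Bigr\}.
\]
Taking $\frac{t}{b_t^2}\log$ of this bound, the right side tends to $-\infty$ by the two scale facts noted in the first paragraph, which proves \eqref{eq:boundary-negligible}. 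The only delicate point is the uniformity in $k$ of the exponential moments for the empty-history law, and this is exactly what \Cref{lem:past-exp} provides via domination by the stationary linear process.
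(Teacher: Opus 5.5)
Your proof is correct and is essentially the paper's argument: the growth bound \eqref{eq:g-growth}, the pathwise estimate $\sup_{k\le s\le k+1}V(X_s)\le V(X_k)+H(0)N(k,k+1]$, uniform exponential tails from \Cref{lem:past-exp} under both initial laws, and a union bound over unit intervals. One small correction: the claim $\frac{t}{b_t^2}\log t\to0$ is false in general (take $b_t=\sqrt t\,\log\log t$), but what is true and sufficient is your parenthetical condition $\log t\ll b_t$, since $\frac{t}{b_t^2}\bigl(\log(C(t+2))-c\,b_t\bigr)=\frac{t}{b_t}\bigl(\frac{\log(C(t+2))}{b_t}-c\bigr)\to-\infty$, which is exactly how the paper concludes.
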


\begin{proof}
By \eqref{eq:g-growth}, it suffices to control $V(X_s)$ uniformly for
$0\le s\le t$.  On a unit interval, monotonicity of $H$ gives
\begin{equation*}
  \sup_{k\le s\le k+1}V(X_s)
  \le V(X_k)+H(0)N(k,k+1].
\end{equation*}
The two terms on the right are dominated by the corresponding functionals of
the stable linear Hawkes process in \Cref{lem:past-exp}.  Thus there are
constants $c,C>0$ such that
\begin{equation}\label{eq:unit-sup-tail}
  \sup_{k\in\N}\mathbb{P}_*\left(
    \sup_{k\le s\le k+1}V(X_s)>x
  \right)\le Ce^{-cx}.
\end{equation}
Therefore, the probability in \eqref{eq:boundary-negligible} is bounded by
$C(t+1)e^{-c' b_t}$.  Since $b_t/\sqrt t\to\infty$, in particular
$b_t/\log t\to\infty$, and
\begin{equation*}
  \frac{t}{b_t^2}\left(\log(C(t+1))-c'b_t\right)
  \longrightarrow-\infty
\end{equation*}
because $t/b_t\to\infty$.  This proves the claim.
\end{proof}

\section{Exponential stabilization and proof of the MDP}\label{sec:bracket}

The preceding reduction leaves one probabilistic condition to verify for the martingale MDP:
exponential stabilization of its predictable quadratic variation.  We now
derive this property from the process-level LDP.

It follows from \eqref{eq:Mtilde-integral} that
\begin{equation}\label{eq:bracket}
  \bracket{\widetilde M}_t=\int_0^t q(X_{s-})\dd s,
\end{equation}
where
\begin{equation}\label{eq:q-def}
  q(\omega):=\ell(\omega)\left(1+Dg(\omega)\right)^2,
  \qquad \omega\in\cX^-.
\end{equation}
The observable need not be bounded, but it has linear growth in the intensity:
\begin{equation}\label{eq:q-bound}
  \underline\phi\le q(\omega)\le
  \frac{\ell(\omega)}{(1-\rho)^2},
  \qquad
  \mathbb{E}_\pi[q(X_{0-})]\le\frac{\mu}{(1-\rho)^2}<\infty.
\end{equation}

\subsection{Level-3 large deviations and sublinear estimates}

We now establish the hypothesis needed for the bounded-jump martingale MDP.
Let $\Omega$ be the space of locally finite simple point configurations on
$\R$, equipped with the vague topology, and let $\cM_S$ be the space of
stationary probability laws on $\Omega$ with a finite first moment, equipped
with the strengthened weak topology used in
\citep{Zhu2014Level3}: weak convergence together with convergence of
$Q\mapsto\mathbb{E}_Q[N(0,1]]$.  Let $\mathsf H(Q)$ denote the process-level
entropy rate of $Q$ relative to the nonlinear Hawkes dynamics, as defined in
\citep{Zhu2014Level3}.

For $\omega\in\Omega$, write
\begin{equation*}
  p^-\omega:=\omega|_{(-\infty,0)}.
\end{equation*}
For every $Q\in\cM_S$, one has $p^-\omega\in\cX^-$ for $Q$-almost every
$\omega$.  Whenever a functional $F$ defined on $\cX^-$ is integrated
against a law on $\Omega$, we use the same symbol $F$ for the lifted
functional $F\circ p^-$.

For a path $\omega$ observed on $(0,t]$, let $\omega_t$ be its
$t$-periodic extension to $\R$, and let $\theta_s$ denote the time shift by
$s$.  Set
\begin{equation}\label{eq:periodized-empirical-process}
  \mathcal R_{t,\omega}
  :=\frac1t\int_0^t\delta_{\theta_s\omega_t}\dd s.
\end{equation}
When the observed configuration is the restriction
$\omega=N|_{(0,t]}$, we use the shorthand
$\mathcal R_{t,N}:=\mathcal R_{t,N|_{(0,t]}}$.  Thus
$\mathcal R_{t,N}$ is the random empirical process obtained by substituting
the observed point configuration into
\eqref{eq:periodized-empirical-process}.
The empirical measures $\mathcal R_{t,\omega}$ and $\mathcal R_{t,N}$ take
values in $\cM_S$.  Theorem~1 in \citep{Zhu2014Level3} states that,
under the empty-history law $\mathbb{P}_\varnothing$, for every closed
$C\subset\cM_S$,
\begin{equation}\label{eq:empty-level3-upper}
  \limsup_{t\to\infty}\frac1t
  \log\mathbb{P}_\varnothing(\mathcal R_{t,N}\in C)
  \le-\inf_{Q\in C}\mathsf H(Q).
\end{equation}
The assumptions on $h$ and $\phi$, including
\eqref{eq:rate-sublinear}, are precisely of the form required there.  Notice
that the upper bound is stated under the empty-history law; we retain that
initial law throughout this section and return to stationarity by coupling in
\Cref{sec:proof-mdp}.

For $Q\in\cM_S$, define
\begin{equation}\label{eq:Fq}
  \mathsf Q(Q):=\int_\Omega q(\omega)Q(\dd\omega).
\end{equation}
Thus $q(\omega)$ in \eqref{eq:Fq} means that \eqref{eq:q-def} is applied to
the strict-past configuration $p^-\omega$.
Although $q$ depends on the entire past, it can be approximated by bounded
continuous local observables.  We state the precise interface needed below.

We will repeatedly use the superexponential estimates established in the
proof of the Level-3 LDP upper bound
\eqref{eq:empty-level3-upper}; see \citep{Zhu2014Level3}.
The following convenient formulation also records how
sublinearity enters our proof.  If $N_t^{\mathrm{per}}$ denotes the
$t$-periodic extension of the points in $(0,t]$, write
\begin{align*}
  z_s&:=\int_{(0,s)}h(s-u)N(\dd u),\\
  z_s^{\mathrm{per}}
    &:=\int_{(-\infty,s)}h(s-u)N_t^{\mathrm{per}}(\dd u).
\end{align*}
For a nonnegative kernel $w$, also set
\begin{align*}
  J_t(w)&:=\int_0^t\int_{(0,s)}w(s-u)N(\dd u)\dd s,\\
  J_t^{\mathrm{per}}(w)
    &:=\int_0^t\int_{(-\infty,s)}w(s-u)
        N_t^{\mathrm{per}}(\dd u)\dd s.
\end{align*}

The quantities above measure the errors produced by truncating large
excitations, large values of $q$, and remote-past kernel contributions.  The
next lemma shows that these errors are superexponentially small at speed $t$;
this is the step in which the sublinear-growth assumption on $\phi$ enters.

\begin{lemma}[Sublinear superexponential estimates]
\label{lem:sublinear-superexp}
Under $\mathbb{P}_\varnothing$, for every $\varepsilon>0$,
\begin{align}
\lim_{A\to\infty}\limsup_{t\to\infty}\frac1t\log\mathbb{P}_\varnothing
  \left(\frac1t\int_0^t
    \left(\1_{\{z_s>A\}}+\1_{\{z_s^{\mathrm{per}}>A\}}\right)\dd s
    >\varepsilon\right)=-\infty,\label{eq:z-tail-superexp},
\end{align}
and
\begin{align}
\lim_{K\to\infty}\limsup_{t\to\infty}\frac1t\log\mathbb{P}_\varnothing
  \left(\frac1t\left\{
    \int_0^t q(X_{s-})\1_{\{q(X_{s-})>K\}}\dd s
    +t\int_\Omega q\1_{\{q>K\}}\dd\mathcal R_{t,N}
  \right\}>\varepsilon\right)=-\infty.
  \label{eq:q-tail-superexp}
\end{align}
If $w_n$ are nonnegative, bounded, integrable kernels with
$\norm{w_n}_1\to0$, then
\begin{equation}\label{eq:small-kernel-superexp}
  \lim_{n\to\infty}\limsup_{t\to\infty}\frac1t\log\mathbb{P}_\varnothing
  \left(\frac{J_t(w_n)+J_t^{\mathrm{per}}(w_n)}{t}
    >\varepsilon\right)=-\infty.
\end{equation}
\end{lemma}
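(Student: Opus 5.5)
The three estimates in \Cref{lem:sublinear-superexp} all reduce to one exponential bound. For a linear functional $\Lambda_t=\int_0^t\int_{(-\infty,s)}w(s-u)N(\dd u)\dd s$, or its periodized analogue, I will show that the probability of a linear-size excess decays at an arbitrarily fast exponential rate. The tool is the exponential martingale of the counting process together with sublinearity. For any $\gamma>0$,
\[
\exp\Bigl\{\gamma N_t-(e^\gamma-1)\int_0^t\lambda_s\dd s\Bigr\}
\]
is a supermartingale. Sublinearity \eqref{eq:rate-sublinear} gives, for every $\eta>0$, a constant $C_\eta$ with $\phi(z)\le C_\eta+\eta z$. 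Since $\int_0^t z_s\dd s\le\norm h_1N_t$, we obtain
\[
\int_0^t\lambda_s\dd s\le C_\eta t+\eta\norm h_1N_t.
\]
Choosing $\eta$ so that $(e^\gamma-1)\eta\norm h_1\le\gamma/2$ and applying Hölder or Cauchy--Schwarz, I get $\mathbb E_\varnothing e^{(\gamma/2)N_t}\le e^{C(\gamma)t}$ for every $\gamma$. Markov's inequality then gives
\[
\limsup_{t\to\infty}\frac1t\log\mathbb P_\varnothing(N_t>Mt)\le C(\gamma)-\gamma M/2,
\]
which tends to $-\infty$ as $M\to\infty$ after optimizing over $\gamma$. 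This is the standard step in \citep{Zhu2014Level3}, and it is the only place sublinearity enters.

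\emph{Small-kernel estimate} \eqref{eq:small-kernel-superexp}. By Fubini, $J_t(w)\le\norm w_1N_t$ and $J_t^{\mathrm{per}}(w)\le\norm w_1N_t$, because each point of the periodic extension in $(0,t]$ carries total mass at most $\norm w_1$ over all its copies integrated against $s\in(0,t]$. Hence the event in \eqref{eq:small-kernel-superexp} is contained in $\{N_t>\varepsilon t/(2\norm{w_n}_1)\}$, and the bound above applies with $M=\varepsilon/(2\norm{w_n}_1)\to\infty$.

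\emph{Estimate} \eqref{eq:z-tail-superexp}. Chebyshev gives
\[
\int_0^t\1_{\{z_s>A\}}\dd s\le A^{-1}\int_0^t z_s\dd s\le A^{-1}\norm h_1N_t,
\]
and the same bound holds for $z^{\mathrm{per}}$. The event is therefore contained in $\{N_t>A\varepsilon t/(2\norm h_1)\}$, and we conclude as before.

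\emph{Estimate} \eqref{eq:q-tail-superexp}. This is the main obstacle, since the integrand is $q\1_{\{q>K\}}$ rather than an indicator. By \eqref{eq:q-bound}, $q\le(1-\rho)^{-2}\ell\le(1-\rho)^{-2}(C_\eta+\eta z)$. Taking $K$ large forces $z>A:=(K(1-\rho)^2-C_\eta)/\eta$ on $\{q>K\}$. On that set I use $q\le 2(1-\rho)^{-2}\eta z$ once $A$ is large, so
\[
\int_0^t q\1_{\{q>K\}}\dd s\le 2(1-\rho)^{-2}\eta\int_0^t z_s\dd s\le 2(1-\rho)^{-2}\eta\norm h_1N_t.
\]
The same computation applies to $t\int q\1_{\{q>K\}}\dd\mathcal R_{t,N}=\int_0^t q\1_{\{q>K\}}(X^{\mathrm{per}}_{s-})\dd s$, with $z^{\mathrm{per}}$ in place of $z$. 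The event is then contained in $\{N_t>c\,\varepsilon t/\eta\}$. Sending $K\to\infty$ lets $\eta\downarrow0$, so the threshold $M\to\infty$ and the bound from the first paragraph finishes the argument.

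The main delicacy is checking that $q(X_{s-})$ under $\mathbb P_\varnothing$ uses $z_s$ with the empty past, so that the bound $\int z_s\le\norm h_1N_t$ holds exactly, and that the periodized integral is correctly bounded by the count in $(0,t]$ alone.
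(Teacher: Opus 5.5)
Your proposal is correct and follows the paper's proof. Both arguments reduce all three estimates to the superexponential count bound $\lim_{M\to\infty}\limsup_t t^{-1}\log\mathbb P_\varnothing(N_t>Mt)=-\infty$, using Fubini's theorem ($\int_0^t z_s\,ds\le\norm h_1N_t$, $J_t(w)\le\norm w_1N_t$, and their periodized analogues), Chebyshev's inequality, and the sublinear bound $q\1_{\{q>K\}}\le C\eta z$; the only difference is that you derive the count bound from the exponential supermartingale rather than citing (3.33) and Lemmas~19--24 of \citep{Zhu2014Level3}. In that derivation, H\"older or Cauchy--Schwarz would tacitly need $\mathbb E_\varnothing e^{(\gamma/2)N_t}<\infty$ a priori, but you can skip it: the pathwise bound $\gamma N_t-(e^\gamma-1)\int_0^t\lambda_s\dd s\ge(\gamma/2)N_t-(e^\gamma-1)C_\eta t$ gives $\mathbb E_\varnothing e^{(\gamma/2)N_t}\le e^{(e^\gamma-1)C_\eta t}$ directly.
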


\begin{proof}
Equation~(3.33) and Lemmas~19--24 of \citep{Zhu2014Level3} give
\begin{equation}\label{eq:count-superexp}
  \lim_{M\to\infty}\limsup_{t\to\infty}\frac1t
  \log\mathbb{P}_\varnothing\left(N(0,t]>Mt\right)=-\infty.
\end{equation}
The remaining statements also follow directly from this estimate.  Indeed,
Fubini's theorem gives
\begin{align*}
  \int_0^t z_s\dd s\le\norm h_1N(0,t],
  \qquad
  \int_0^t z_s^{\mathrm{per}}\dd s=\norm h_1N(0,t],
\end{align*}
and
\begin{align*}
  J_t(w_n)\le\norm{w_n}_1N(0,t],
  \qquad
  J_t^{\mathrm{per}}(w_n)=\norm{w_n}_1N(0,t].
\end{align*}
These inequalities prove \eqref{eq:z-tail-superexp} and
\eqref{eq:small-kernel-superexp}.

Let $c_\rho:=(1-\rho)^{-2}$.  By \eqref{eq:q-bound},
$q\le c_\rho\phi(z)$.  For large $K$, define
\begin{equation*}
  a_K:=\inf\left\{\frac{z}{\phi(z)}:
       \phi(z)>\frac{K}{c_\rho}\right\},
\end{equation*}
with the infimum of the empty set interpreted as $+\infty$.
Sublinearity gives $a_K\to\infty$, and hence
\begin{equation*}
  q\1_{\{q>K\}}
  \le\frac{c_\rho}{a_K}z.
\end{equation*}
Apply the two preceding Fubini identities and
\eqref{eq:count-superexp} to the actual and periodized histories.  This proves
\eqref{eq:q-tail-superexp}.
\end{proof}

We first record a stability estimate for Hawkes processes subject to
deterministic forcing.  For a nonnegative locally integrable forcing profile
$\psi$, let
$N^\psi$ denote the process on $(0,\infty)$ with an empty history and intensity
\begin{equation}\label{eq:forced-intensity}
  \lambda_t^\psi
  :=\phi\!\left(\psi(t)+\int_{(0,t)}h(t-s)N^\psi(\dd s)\right).
\end{equation}
Processes with different forcing profiles are always constructed from the
same Poisson random measure.  For $T<\infty$, set
\begin{equation}\label{eq:GammaT}
  \Gamma_T(\psi)
  :=\int_0^T\left(\mathbb{E}\left[\lambda_s^{\psi+h}\right]
                       -\mathbb{E}\left[\lambda_s^\psi\right]\right)\dd s,
\end{equation}
where $\psi+h$ denotes the forcing profile $t\mapsto\psi(t)+h(t)$.
We have a stability estimate in the following lemma.

\begin{lemma}[Stability under deterministic forcing]
\label{lem:forcing-stability}
For nonnegative locally integrable forcing profiles $\psi,\varphi$,
\begin{equation}\label{eq:forcing-stability}
  \int_0^T\mathbb{E}\abs{\lambda_s^\psi-\lambda_s^\varphi}\dd s
  \le \frac{L}{1-\rho}\int_0^T\abs{\psi(s)-\varphi(s)}\dd s.
\end{equation}
Moreover,
\begin{equation}\label{eq:Gamma-bounds}
  0\le\Gamma_T(\psi)\le\frac{\rho}{1-\rho},
  \qquad
  \abs{\Gamma_T(\psi)-\Gamma_T(\varphi)}
  \le\frac{2L}{1-\rho}
       \int_0^T\abs{\psi(s)-\varphi(s)}\dd s.
\end{equation}
\end{lemma}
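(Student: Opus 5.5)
The plan is to reduce everything to a comparison of two forced processes under the common Poisson embedding, then bound the expected discrepancy generation by generation with the resolvent $r$.

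\textbf{Step 1: reduce to ordered profiles.} Put $\psi\vee\varphi$ and $\psi\wedge\varphi$. Because $\phi$ is nondecreasing and $h\ge0$, the monotone embedding gives $N^{\psi\wedge\varphi}\le N^{\psi},N^{\varphi}\le N^{\psi\vee\varphi}$ pathwise. The same ordering holds for the intensities. Hence
\[
\abs{\lambda^\psi_s-\lambda^\varphi_s}\le\lambda^{\psi\vee\varphi}_s-\lambda^{\psi\wedge\varphi}_s .
\]
Moreover $\psi\vee\varphi-\psi\wedge\varphi=\abs{\psi-\varphi}$. So it suffices to prove \eqref{eq:forcing-stability} when $\psi\ge\varphi$, with the absolute values removed.

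\textbf{Step 2: the ordered estimate.} Assume $\psi\ge\varphi$ and set $D=N^\psi-N^\varphi\ge0$. Let $\delta(s):=\mathbb E[\lambda^\psi_s-\lambda^\varphi_s]\ge0$. The Lipschitz property gives
\[
\lambda^\psi_s-\lambda^\varphi_s\le L\bigl(\psi(s)-\varphi(s)\bigr)+L\int_{(0,s)}h(s-u)D(\dd u).
\]
Take expectations and use the compensator identity $\mathbb E[D(\dd u)]=\delta(u)\dd u$. This yields the renewal inequality $\delta\le L(\psi-\varphi)+Lh*\delta$ on $[0,T]$. Iterating it, which is legitimate because all terms are nonnegative and $\delta$ is locally integrable by comparison with the dominating linear process of \Cref{lem:past-exp}, gives $\delta\le L(\psi-\varphi)+r*L(\psi-\varphi)$. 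Integrate over $[0,T]$ and use $\int_0^T (r*g)\le\norm r_1\int_0^T g$ together with \eqref{eq:resolvent-moments}. The result is the bound $L(1+\rho/(1-\rho))\int_0^T(\psi-\varphi)=\frac{L}{1-\rho}\int_0^T(\psi-\varphi)$. This proves \eqref{eq:forcing-stability}.

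\textbf{Step 3: the bounds on $\Gamma_T$.} Apply Step 2 with the ordered pair $\psi+h\ge\psi$. Nonnegativity of $\Gamma_T$ comes from monotonicity. The upper bound is at most $\frac{L}{1-\rho}\norm h_1=\frac{\rho}{1-\rho}$. For the Lipschitz bound, write
\[
\Gamma_T(\psi)-\Gamma_T(\varphi)=\int_0^T\mathbb E\bigl[\lambda^{\psi+h}_s-\lambda^{\varphi+h}_s\bigr]\dd s-\int_0^T\mathbb E\bigl[\lambda^{\psi}_s-\lambda^{\varphi}_s\bigr]\dd s .
\]
Bound each integral by \eqref{eq:forcing-stability}; the forcing difference is $\abs{\psi-\varphi}$ in both cases. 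This gives the factor $2L/(1-\rho)$.

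\textbf{Main obstacle.} The only delicate point is justifying the renewal inequality in Step 2: finiteness of $\delta$, the interchange of expectation and the stochastic integral, and the iteration of the inequality. I would first truncate time at the $n$-th discrepancy point, or equivalently use local integrability from domination by a linear Hawkes process with forcing $\psi$. I would then pass to the limit by monotone convergence.
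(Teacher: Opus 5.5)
Your proposal is correct and follows essentially the paper's argument: the common Poisson coupling, the Lipschitz renewal inequality summed through the resolvent $r$ (the paper instead absorbs the convolution term using $\rho<1$), and the triangle inequality for the Lipschitz bound on $\Gamma_T$. The differences are harmless variations: you reduce to ordered profiles via $\psi\vee\varphi$ and $\psi\wedge\varphi$ where the paper works directly with the symmetric discrepancy, and you obtain $\Gamma_T\le\rho/(1-\rho)$ as the special case $\psi+h\ge\psi$ of the first estimate rather than by a separate Volterra iteration; your explicit check that $\int_0^T\delta<\infty$ also supplies a point the paper leaves implicit.
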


\begin{proof}
Under the common Poisson embedding, the expected density of discrepancy points
between $N^\psi$ and $N^\varphi$ equals
$\mathbb{E}\abs{\lambda_t^\psi-\lambda_t^\varphi}$.  The Lipschitz property gives
\begin{equation*}
  \mathbb{E}\abs{\lambda_t^\psi-\lambda_t^\varphi}
  \le L\abs{\psi(t)-\varphi(t)}
     +L\int_0^t h(t-s)
       \mathbb{E}\abs{\lambda_s^\psi-\lambda_s^\varphi}\dd s.
\end{equation*}
Integrating over $[0,T]$, using Tonelli's theorem, and absorbing the
convolution term by $\rho<1$ proves
\eqref{eq:forcing-stability}.  For the first bound, monotonicity gives
$\lambda_t^{\psi+h}\ge\lambda_t^\psi$ under the common coupling.  Set
\begin{equation*}
  d_\psi(t)
  :=\mathbb{E}\left[\lambda_t^{\psi+h}-\lambda_t^\psi\right]\ge0.
\end{equation*}
The Lipschitz property and the Poisson coupling imply
\begin{equation*}
  d_\psi(t)
  \le Lh(t)+L\int_0^t h(t-s)d_\psi(s)\dd s.
\end{equation*}
Iterating this Volterra inequality gives
\begin{equation*}
  d_\psi(t)
  \le\sum_{n=1}^\infty L^nh^{*n}(t)=r(t).
\end{equation*}
Consequently,
\begin{equation*}
  0\le\Gamma_T(\psi)
  =\int_0^T d_\psi(s)\dd s
  \le\int_0^\infty r(s)\dd s
  =\frac{\rho}{1-\rho},
\end{equation*}
which proves the first bound in \eqref{eq:Gamma-bounds}.

For the second bound in \eqref{eq:Gamma-bounds}, insert the corresponding terms with forcing profiles
$\varphi+h$ and $\varphi$.  By the triangle inequality,
\begin{align*}
  \abs{\Gamma_T(\psi)-\Gamma_T(\varphi)}
  \le
  \int_0^T
    \mathbb E\abs{\lambda_s^{\psi+h}-\lambda_s^{\varphi+h}}\dd s
    +\int_0^T
    \mathbb E\abs{\lambda_s^\psi-\lambda_s^\varphi}\dd s.
\end{align*}
Applying \eqref{eq:forcing-stability} to the pairs
$(\psi+h,\varphi+h)$ and $(\psi,\varphi)$ yields
\begin{equation*}
  \abs{\Gamma_T(\psi)-\Gamma_T(\varphi)}
  \le\frac{2L}{1-\rho}
       \int_0^T\abs{\psi(s)-\varphi(s)}\dd s.
\end{equation*}
This completes the proof.
\end{proof}

\subsection{Local approximation of the bracket observable}

Applying the Level-3 upper bound to the bracket requires the empirical
functional $Q\mapsto\mathsf Q(Q)$ to be continuous on the relevant entropy
level sets.  Although $q$ is unbounded and depends on the entire past, the
response stability and the preceding tail estimates permit approximation by
bounded continuous local observables.

\begin{proposition}[Admissibility of the bracket observable]
\label{prop:bracket-admissible}
Under \Cref{ass:kernel,ass:rate}, the map $\mathsf Q:\cM_S\to\R$ in
\eqref{eq:Fq} is continuous on every level set
$\{Q:\mathsf H(Q)\le a\}$.  More precisely, there are bounded continuous
local observables $(q_n)$ such that, for every $a<\infty$,
\begin{equation}\label{eq:q-approx}
  \lim_{n\to\infty}
  \sup_{\mathsf H(Q)\le a}
  \int_\Omega\abs{q-q_n}\dd Q=0.
\end{equation}
\end{proposition}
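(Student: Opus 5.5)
We approximate $q=\ell\,(1+Dg)^2$ in three successive steps: truncate its size, localize its dependence on the past, and regularize it, each with error uniform on the level sets $\{\mathsf H(Q)\le a\}$. Continuity of $\mathsf Q$ on each level set then follows from \eqref{eq:q-approx}: every bounded continuous $q_n$ gives a weakly continuous functional $Q\mapsto\int q_n\dd Q$, and a uniform limit of continuous functions is continuous.

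\textbf{Step 1: uniform integrability on level sets.} We first show that
\[
\lim_{K\to\infty}\sup_{\mathsf H(Q)\le a}\int q\1_{\{q>K\}}\dd Q=0
\quad\text{and}\quad
\lim_{A\to\infty}\sup_{\mathsf H(Q)\le a}Q(z>A)=0.
\]
By \eqref{eq:q-bound} and the sublinearity argument in the proof of \Cref{lem:sublinear-superexp}, $q\1_{\{q>K\}}\le (c_\rho/a_K)\,z$, so it suffices to bound $\mathbb E_Q[z]=\norm h_1\mathbb E_Q[N(0,1]]$ uniformly on level sets. For this we use the standard entropy inequality for process-level entropy: $\mathsf H(Q)\le a$ together with the superexponential count estimate \eqref{eq:count-superexp} forces $\mathbb E_Q[N(0,1]]\le C(a)$. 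Concretely, take the Donsker--Varadhan variational bound against the exponential tilt $e^{\gamma N(0,t]}$, whose growth rate is finite for every $\gamma$ by \eqref{eq:count-superexp}. With this bound, replacing $q$ by $q\wedge K$ costs $o(1)$ uniformly on the level set.

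\textbf{Step 2: localization in the past.} Fix a horizon $T$ and a truncation depth $R$. Set $\ell_R(\omega)=\phi\bigl(\int_{[-R,0)}h(-s)\omega(\dd s)\bigr)$, so that $\abs{\ell-\ell_R}\le L\int_{(-\infty,-R)}h(-s)\omega(\dd s)$. The $Q$-expectation of this error is at most $L\,\mathbb E_Q[N(0,1]]\int_R^\infty h$, which is uniformly small by Step~1. For $Dg$, we use the forcing representation: $Dg(\omega)$ is the limit as $T\to\infty$ of $\Gamma_T(\psi_\omega)$, where $\psi_\omega(t)=\int h(t-s)\omega(\dd s)$ is the forcing generated by the past. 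The tail $\int_T^\infty r$ controls the dependence on $T$ uniformly in $\omega$; this uses \eqref{eq:response-density} and the fact that the discrepancy after time $T$ is at most $\int_T^\infty r$. We then replace $\psi_\omega$ by the forcing $\psi_{\omega,R}$ from points in $[-R,0)$. By \eqref{eq:Gamma-bounds},
\[
\abs{\Gamma_T(\psi_\omega)-\Gamma_T(\psi_{\omega,R})}\le \frac{2L}{1-\rho}\int_{(-\infty,-R)}\bigl(H(-s)-H(T-s)\bigr)\,\omega(\dd s),
\]
whose $Q$-mean is at most $\frac{2L}{1-\rho}\,\mathbb E_Q[N(0,1]]\int_R^\infty H$. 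This is small by \eqref{eq:H-integral} and Step~1. Since $1+Dg\le(1-\rho)^{-1}$ is bounded, the error made in the square is at most a constant times the error in $Dg$, multiplied by $\ell$ after truncation at $K$. Combining the pieces, $q^{K,R,T}:=\bigl(\ell_R(1+\Gamma_T(\psi_{\omega,R}))^2\bigr)\wedge K$ approximates $q$ uniformly on level sets.

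\textbf{Step 3: continuity, the main obstacle.} The observable $q^{K,R,T}$ is bounded and depends only on $\omega|_{[-R,0)}$. It is also continuous at every configuration with no atom at $-R$ and no atom at $0$, because $h$ is continuous and $\Gamma_T$ is Lipschitz in $\psi$ in $L^1$. Boundary atoms nevertheless make it discontinuous in the vague topology. We would handle this by smoothing the cutoff: replace $\1_{[-R,0)}$ by a continuous weight equal to $1$ on $[-R+1,-\delta]$, vanishing near $0$ and beyond $-R$. Changing the weight near $0$ costs at most $L\,h(0)\,\mathbb E_Q[N(-\delta,0)]=O(\delta\,\mathbb E_Q N(0,1])$ by stationarity, uniformly by Step~1; the outer edge is handled as in Step~2. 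The resulting functional is a bounded continuous function of finitely many vaguely continuous integrals, so it is a legitimate $q_n$. Letting $K,R,T\to\infty$ and $\delta\to0$ along a diagonal sequence yields \eqref{eq:q-approx}. I expect the genuine difficulty to lie in Step~1, namely obtaining the uniform first-moment bound on entropy level sets from \citep{Zhu2014Level3}; if it is not directly available there, I would derive it from \eqref{eq:count-superexp} through the variational characterization of $\mathsf H$.
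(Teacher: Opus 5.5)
Your proposal is correct and has the same architecture as the paper's proof. You truncate at $K$ via sublinearity and replace $Dg$ by $\Gamma_T$ at the uniform cost $\int_T^\infty r$. You then cut off the inherited forcing continuously near $0$ and beyond depth $R$, control the cutoff errors by the $L^1$-Lipschitz bound \eqref{eq:Gamma-bounds} and Campbell's formula, and finish diagonally. Three sub-steps differ from the paper's and need comment.

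\textbf{The bound on $m_Q$.} The step you single out as the main difficulty, $\sup_{\mathsf H(Q)\le a}m_Q<\infty$, is a one-liner in the paper. The level sets of $\mathsf H$ are compact in the strengthened topology, and $Q\mapsto m_Q$ is continuous there. Your variational route can also be made to work, but not with the justification you give. \eqref{eq:count-superexp} is a tail statement at a single threshold $Mt$ and does not bound $\mathbb E_\varnothing e^{\gamma N(0,t]}$. You only need one $\gamma>0$, and it is supplied by domination by the linear process with kernel $Lh$. The lower half of the Level-3 LDP, applied to the continuous functional $Q\mapsto\gamma m_Q$ (note $m_{\mathcal R_{t,N}}=N(0,t]/t$), then gives $\gamma m_Q-\mathsf H(Q)\le\limsup_t t^{-1}\log\mathbb E_\varnothing e^{\gamma N(0,t]}<\infty$.

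\textbf{The unbounded factor $\ell$.} To control $\ell$ multiplying the $Dg$-error, the paper works on $\{z\le A\}$ with a constant $C_A$ and pays $K\,Q(z>A)\le K\norm h_1 m_Q/A$ off that set. Your truncation at $K$ is a valid alternative, but you should state why it works. For $x\ge0$ and $y\ge1$ one has $(xy)\wedge K=((x\wedge K)y)\wedge K$, and this applies because $Dg\ge0$ and $\Gamma_T\ge0$. It yields the pointwise error $(1-\rho)^{-2}\abs{\ell-\ell_R}+\frac{2K}{1-\rho}\abs{Dg-\Gamma_T(\psi_{\omega,R})}$.

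\textbf{Continuity.} $\Gamma_T(\psi)$ is not a function of finitely many integrals, since it depends on the whole profile $t\mapsto\psi(t)$ on $[0,T]$. Continuity needs vague convergence $\omega_n\to\omega$ to force the cut-off forcing profiles to converge in $L^1[0,T]$; only then does the Lipschitz bound apply. The paper gets uniform convergence from the compactness of the family of test functions $s\mapsto\chi_{R,\eta}(-s)h(t-s)$, $t\in[0,T]$, which uses continuity of $h$.
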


\begin{proof}
First replace the corrector by
\begin{equation}\label{eq:gT}
  g_T=\int_0^T P_s f\dd s.
\end{equation}
By the add-one coupling,
\begin{equation}\label{eq:Dg-time-tail}
  \sup_{\omega\in\cX^-}\abs{Dg(\omega)-Dg_T(\omega)}
  \le\int_T^\infty r(s)\dd s
  \rightarrow 0,
\end{equation}
as $T\rightarrow\infty$.
This convergence is uniform and uses only $r\in L^1$.

We now truncate only the excitation inherited from the initial history.  Fix
$R>1$ and $0<\eta<1$, and choose a continuous function
$\chi_{R,\eta}:\R_+\to[0,1]$ that is zero on
$[0,\eta/2]\cup[R+1,\infty)$ and one on $[\eta,R]$, with linear
interpolation on the two remaining intervals.  For a two-sided configuration
$\omega$, define
\begin{align}
  z_{R,\eta}(\omega)
  &:=\int_{(-\infty,0)}\chi_{R,\eta}(-s)h(-s)\omega(\dd s),
    \label{eq:zReta}\\
  \psi_{R,\eta}(t,\omega)
  &:=\int_{(-\infty,0)}\chi_{R,\eta}(-s)h(t-s)\omega(\dd s),
    \qquad 0\le t\le T,\label{eq:psiReta}\\
  q_{K,R,T,\eta}(\omega)
  &:=\left[\phi\left(z_{R,\eta}(\omega)\right)
    \left(1+\Gamma_T(\psi_{R,\eta}(\cdot,\omega))\right)^2\right]\wedge K.
    \label{eq:qRTeta}
\end{align}
This observable depends only on the configuration in
$[-R-1,-\eta/2]$, and it is bounded by $K$.

We next verify continuity rather than appeal to endpoint smoothing.  If
$\omega_n\to\omega$ vaguely, then $z_{R,\eta}(\omega_n)\to
z_{R,\eta}(\omega)$, since its integrand is a continuous compactly supported
test function.  The family of test functions
\begin{equation*}
  s\longmapsto \chi_{R,\eta}(-s)h(t-s)\1_{\{s<0\}},
  \qquad 0\le t\le T,
\end{equation*}
is a compact subset of the continuous functions supported on
$[-R-1,-\eta/2]$.  Vague convergence therefore gives
\begin{equation*}
  \sup_{0\le t\le T}
  \abs{\psi_{R,\eta}(t,\omega_n)
       -\psi_{R,\eta}(t,\omega)}\longrightarrow0.
\end{equation*}
Together with \eqref{eq:Gamma-bounds} and continuity of $\phi$, this proves
that $q_{K,R,T,\eta}$ is a bounded continuous local observable.

It remains to prove uniform approximation.  Write
\begin{equation}\label{eq:full-past-excitation}
  \psi_\omega(t):=\int_{(-\infty,0)}h(t-s)\omega(\dd s).
\end{equation}
The future process from history $\omega$ has the same law as the Hawkes
process in \eqref{eq:forced-intensity} with forcing profile $\psi_\omega$.
Consequently,
\begin{equation}\label{eq:DgT-Gamma}
  Dg_T(\omega)=\Gamma_T(\psi_\omega).
\end{equation}
Set
\begin{align*}
  I_h^{R,\eta}(\omega)
  &:=\int_{(-\infty,0)}
    \left(1-\chi_{R,\eta}(-s)\right)h(-s)\omega(\dd s),\\
  I_H^{R,\eta}(\omega)
  &:=\int_{(-\infty,0)}
    \left(1-\chi_{R,\eta}(-s)\right)H(-s)\omega(\dd s).
\end{align*}
Tonelli's theorem yields
\begin{equation*}
  \int_0^T\abs{\psi_\omega(t)
       -\psi_{R,\eta}(t,\omega)}\dd t
  \le I_H^{R,\eta}(\omega).
\end{equation*}
Let $q^K:=q\wedge K$.  On the set $\{z(\omega)\le A\}$,
\eqref{eq:Dg-time-tail}, \eqref{eq:Gamma-bounds}, and the Lipschitz property of
$\phi$ imply, for a finite constant $C_A$ independent of
$K,R,T,\eta$ and $\omega$,
\begin{equation}\label{eq:qRT-pointwise-error}
  \abs{q^K(\omega)-q_{K,R,T,\eta}(\omega)}
  \le C_A\left\{
      \int_T^\infty r(s)\dd s
      +I_h^{R,\eta}(\omega)+I_H^{R,\eta}(\omega)
    \right\}.
\end{equation}
Outside that set both truncated observables are bounded by $K$.  Therefore
the right side of \eqref{eq:qRT-pointwise-error} may be augmented by
$K\1_{\{z>A\}}$ to obtain a bound valid on the whole state space.

Let $m_Q:=\mathbb{E}_Q[N(0,1]]$.  Campbell's formula
\citep{DaleyVereJones2003} and the definition of the cutoff give
\begin{align*}
  \int_\Omega I_h^{R,\eta}\dd Q
  &\le m_Q\left(\int_0^\eta h(s)\dd s+H(R)\right),\\
  \int_\Omega I_H^{R,\eta}\dd Q
  &\le m_Q\left(\int_0^\eta H(s)\dd s
                     +\int_R^\infty H(s)\dd s\right).
\end{align*}
Since $h$ is nonincreasing, the two integrals over $[0,\eta]$ are at
most $\eta h(0)$ and $\eta H(0)$, respectively.  Also, Campbell's formula
gives
\[
\int_\Omega z\dd Q=\norm h_1m_Q,
\]
so that Markov's inequality yields
\[
Q(z>A)\le\frac{\norm h_1m_Q}{A}.
\]
We have therefore proved
\begin{equation}\label{eq:qRT-error}
  \int_\Omega\abs{q^K-q_{K,R,T,\eta}}\dd Q
  \le \frac{K\norm h_1m_Q}{A}
    +C_A\left[\int_T^\infty r(s)\dd s
      +m_Q\left\{\eta+H(R)+\int_R^\infty H(s)\dd s\right\}\right].
\end{equation}
The level sets of $\mathsf H$ are compact in the strengthened topology, and
$Q\mapsto m_Q$ is continuous in that topology.  The quantities $m_Q$ are thus
uniformly bounded on each entropy level set.  Set
\begin{equation*}
  M_a:=\sup_{\mathsf H(Q)\le a}m_Q<\infty.
\end{equation*}
Sublinearity yields the remaining uniform-integrability result.  Indeed, for
every $\gamma>0$ there is $A_\gamma$ such that
$\phi(z)\le\gamma z$ for $z\ge A_\gamma$.  If
\begin{equation*}
  K>\frac{1}{(1-\rho)^2}
       \sup_{0\le z\le A_\gamma}\phi(z),
\end{equation*}
then \eqref{eq:q-bound} gives $\{q>K\}\subseteq\{z>A_\gamma\}$ and
\begin{equation*}
  0\le q-q^K
  \le q\1_{\{q>K\}}
  \le\frac{\gamma}{(1-\rho)^2}z.
\end{equation*}
Consequently,
\begin{equation*}
  \limsup_{K\to\infty}
  \sup_{\mathsf H(Q)\le a}
  \int_\Omega\left(q-q^K\right)\dd Q
  \le\frac{\gamma\norm h_1M_a}{(1-\rho)^2}.
\end{equation*}
Letting $\gamma\downarrow0$ proves
\begin{equation}\label{eq:qK-uniform-tail}
  \lim_{K\to\infty}
  \sup_{\mathsf H(Q)\le a}
  \int_\Omega\left(q-q^K\right)\dd Q=0.
\end{equation}
We now make the diagonal choice explicit.  For each $n\ge1$, first choose
$K_n\ge n$ so that
\begin{equation*}
  \sup_{\mathsf H(Q)\le n}
  \int_\Omega\left(q-q^{K_n}\right)\dd Q\le\frac{1}{2n}.
\end{equation*}
With $K_n$ fixed, choose successively $A_n,T_n,R_n\ge n$ and
$0<\eta_n\le1/n$ so that \eqref{eq:qRT-error} gives
\begin{equation*}
  \sup_{\mathsf H(Q)\le n}
  \int_\Omega
    \abs{q^{K_n}-q_{K_n,R_n,T_n,\eta_n}}\dd Q
  \le\frac{1}{2n}.
\end{equation*}
More precisely, $A_n$ first makes the term proportional to $K_n/A_n$
small; with $C_{A_n}$ then fixed, $T_n$ and $R_n$ make the two tail
terms small, and finally $\eta_n$ makes the remaining near-zero term small.
Set
\begin{equation}\label{eq:qn-diagonal-definition}
  q_n:=q_{K_n,R_n,T_n,\eta_n}.
\end{equation}
The triangle inequality yields
\begin{equation*}
  \sup_{\mathsf H(Q)\le n}
  \int_\Omega\abs{q-q_n}\dd Q\le\frac1n.
\end{equation*}
For every fixed $a<\infty$, the same bound holds on
$\{\mathsf H(Q)\le a\}$ whenever $n\ge a$.  This proves
\eqref{eq:q-approx}.  Since each $q_n$ is a bounded continuous local
observable, $\mathsf Q$ is a uniform limit, on every entropy level set, of
the continuous maps $Q\mapsto\int_\Omega q_n\dd Q$.  Hence $\mathsf Q$ is
continuous there.
\end{proof}

The next lemma removes the periodic extension in
\eqref{eq:periodized-empirical-process}.  This is not automatic for the
infinite-past observable $q$, so we use the quantitative approximation from
\Cref{prop:bracket-admissible}.

\begin{lemma}[Exponential removal of periodization]
\label{lem:remove-periodization}
For every $\delta>0$,
\begin{equation}\label{eq:remove-periodization}
  \limsup_{t\to\infty}\frac1t\log\mathbb{P}_\varnothing\left(
    \abs{\frac1t\int_0^t q(X_{s-})\dd s
      -\mathsf Q(\mathcal R_{t,N})}>\delta
  \right)=-\infty.
\end{equation}
\end{lemma}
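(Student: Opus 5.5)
The plan is to compare both sides through the bounded local observables $q_n$ from \Cref{prop:bracket-admissible}, or more precisely through the truncations $q^K$ and $q_{K,R,T,\eta}$ built in its proof. For fixed $n$ we split
\begin{equation*}
  \frac1t\int_0^t q(X_{s-})\dd s-\mathsf Q(\mathcal R_{t,N})
  =E_1+E_2+E_3,
\end{equation*}
where
\begin{align*}
  E_1&:=\frac1t\int_0^t\left(q-q_n\right)(X_{s-})\dd s,\\
  E_2&:=\frac1t\int_0^t q_n(X_{s-})\dd s-\int_\Omega q_n\dd\mathcal R_{t,N},\\
  E_3&:=\int_\Omega\left(q_n-q\right)\dd\mathcal R_{t,N}.
\end{align*}
We then show that each term exceeds $\delta/3$ only with probability that is superexponentially small at speed $t$, first letting $t\to\infty$ and then the truncation parameters tend to their limits.

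\textbf{The term $E_2$.} Since $q_n$ is bounded by $K$ and depends only on the configuration in $[-R-1,-\eta/2]$, the periodized shift $\theta_s N_t^{\mathrm{per}}$ and the actual history $X_{s-}$ agree on that window whenever $s\ge R+1$. The only exceptions are the times $s\in(0,R+1)$, where the periodized past wraps around to points near $t$, and the empty-history past contains nothing. Hence $|E_2|\le 2K(R+1)/t$, which is deterministic and vanishes as $t\to\infty$.

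\textbf{The terms $E_1$ and $E_3$.} We do not use the level-set statement of \Cref{prop:bracket-admissible} directly. Instead we reuse its pointwise bound \eqref{eq:qRT-pointwise-error}, augmented by $K\1_{\{z>A\}}$, together with $q-q^K=q\1_{\{q>K\}}$. This gives
\begin{equation*}
  \abs{q-q_{K,R,T,\eta}}
  \le q\1_{\{q>K\}}+K\1_{\{z>A\}}
  +C_A\left\{\int_T^\infty r(s)\dd s+I_h^{R,\eta}+I_H^{R,\eta}\right\}.
\end{equation*}
We evaluate this along the actual histories $X_{s-}$ and integrate over $s$ for $E_1$, and integrate it against $\mathcal R_{t,N}$ (that is, along the periodized histories) for $E_3$. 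Each resulting piece is handled by \Cref{lem:sublinear-superexp}:
\begin{itemize}[nosep]
  \item the $q\1_{\{q>K\}}$ pieces by \eqref{eq:q-tail-superexp}, which treats the actual and periodized integrals simultaneously;
  \item the $K\1_{\{z>A\}}$ pieces by \eqref{eq:z-tail-superexp};
  \item the $I_h^{R,\eta}$ and $I_H^{R,\eta}$ pieces by \eqref{eq:small-kernel-superexp}, applied with the kernels $w=(1-\chi_{R,\eta})h$ and $w=(1-\chi_{R,\eta})H$.
\end{itemize}
These kernels are bounded, and their $L^1$ norms are at most $\eta h(0)+H(R)$ and $\eta H(0)+\int_R^\infty H$, which tend to $0$. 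Integrating $I_h^{R,\eta}(X_{s-})$ over $s$ gives exactly a quantity of the form $J_t(w)$, or $J_t^{\mathrm{per}}(w)$ in the periodized case. The deterministic term $C_A\int_T^\infty r$ is made smaller than $\delta/12$ by taking $T$ large.

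\textbf{Order of parameters and the main obstacle.} The order of choices mirrors the diagonal choice in \Cref{prop:bracket-admissible}. First choose $K$ so that the $q$-tail probability is superexponentially small. Then choose $A$ so that the $K\1_{\{z>A\}}$ contribution is small; since $K$ is fixed, \eqref{eq:z-tail-superexp} must be used with the threshold $\varepsilon=\delta/(12K)$. Then fix $C_A$, and choose $T$, $R$ and $\eta$. After a union bound, $\limsup_t t^{-1}\log\mathbb P_\varnothing(\cdot)$ is bounded by the maximum of the individual rates. The limits in \Cref{lem:sublinear-superexp} make each rate smaller than any $-M$, which gives $-\infty$. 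I expect the main obstacle to be this bookkeeping: superexponential smallness must survive the sequential choice of $(K,A,T,R,\eta)$, and the lemma's limits must be applied with $\varepsilon$ depending on previously fixed parameters. The argument also requires checking that the pointwise bound \eqref{eq:qRT-pointwise-error}, which was stated on the strict-past space, applies equally to the periodized configurations $p^-\theta_s N_t^{\mathrm{per}}$, whose $z$ and $V$ are finite.
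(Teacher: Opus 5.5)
Your proposal is correct and follows essentially the same route as the paper. It uses the same split through the bounded local observable $q_n=q_{K,R,T,\eta}$ and the same deterministic window bound $2K(R+1)/t$. It also uses the same error decomposition $B_{t,K}+KI_{t,A}+C_AS_{t,R,\eta}+2C_A\tau_T$, controlled by \Cref{lem:sublinear-superexp} with the same sequential choice of $K,A,T,R,\eta$; the paper only merges your two kernels into $w_{R,\eta}=(1-\chi_{R,\eta})(h+H)$. One small slip: $q-q^K=(q-K)^+$ is bounded by, not equal to, $q\1_{\{q>K\}}$, but the inequality is all you use.
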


\begin{proof}
For the diagonal sequence $(q_n)$ constructed in the proof of
\Cref{prop:bracket-admissible}, set
\begin{equation*}
  E_{t,n}
  :=\frac1t\int_0^t\abs{q(X_{s-})-q_n(X_{s-})}\dd s
   +\int_\Omega\abs{q-q_n}\dd\mathcal R_{t,N}.
\end{equation*}
We claim that, for every $\varepsilon>0$,
\begin{equation}\label{eq:q-exponentially-good}
  \lim_{n\to\infty}
  \limsup_{t\to\infty}\frac1t
  \log\mathbb{P}_\varnothing(E_{t,n}>\varepsilon)=-\infty.
\end{equation}
To see this, use \eqref{eq:q-tail-superexp} to replace $q$ by $q^K$.
For fixed $K$, augment \eqref{eq:qRT-pointwise-error} by
$K\1_{\{z>A\}}$, as in the proof of
\Cref{prop:bracket-admissible}, and let
\begin{equation*}
  w_{R,\eta}(a)
  :=\left(1-\chi_{R,\eta}(a)\right)(h(a)+H(a)).
\end{equation*}
Its mass satisfies
\begin{equation}\label{eq:w-mass}
  \norm{w_{R,\eta}}_1
  \le \eta\left(h(0)+H(0)\right)
      +H(R)+\int_R^\infty H(a)\dd a,
\end{equation}
which tends to zero by first sending $R\to\infty$ and then
$\eta\downarrow0$.  To identify each error explicitly, set
\begin{align*}
  B_{t,K}
  &:=\frac1t\int_0^t(q-q^K)(X_{s-})\dd s
    +\int_\Omega(q-q^K)\dd\mathcal R_{t,N},\\
  I_{t,A}
  &:=\frac1t\int_0^t
    \left(\1_{\{z_s>A\}}+\1_{\{z_s^{\mathrm{per}}>A\}}\right)\dd s,\\
  S_{t,R,\eta}
  &:=\frac{J_t(w_{R,\eta})+J_t^{\mathrm{per}}(w_{R,\eta})}{t},\\
  \tau_T&:=\int_T^\infty r(s)\dd s.
\end{align*}
For $q_n=q_{K,R,T,\eta}$, applying
\eqref{eq:qRT-pointwise-error} to the actual and periodized histories gives
\begin{equation}\label{eq:Etn-error-decomposition}
  E_{t,n}
  \le B_{t,K}+KI_{t,A}+C_AS_{t,R,\eta}+2C_A\tau_T.
\end{equation}
Here $KI_{t,A}$ is the indicator error, the two terms in
$S_{t,R,\eta}$ are respectively the actual and periodized shot-noise errors,
and $2C_A\tau_T$ is the deterministic corrector time-tail.  By
\eqref{eq:q-tail-superexp}, $B_{t,K}$ is superexponentially negligible as
$K\to\infty$.  For fixed $K$, \eqref{eq:z-tail-superexp} gives the same
conclusion for $KI_{t,A}$ as $A\to\infty$.  After $A$ is fixed, choose $T$
so large that $2C_A\tau_T$ is arbitrarily small.  Finally, choose
$R\to\infty$ and then $\eta\downarrow0$.  By \eqref{eq:w-mass},
$\norm{w_{R,\eta}}_1\to0$, so \eqref{eq:small-kernel-superexp} controls
$C_AS_{t,R,\eta}$.  Choosing these parameters diagonally as in
\eqref{eq:qn-diagonal-definition} proves
\eqref{eq:q-exponentially-good}.

By construction,
\begin{equation*}
  q_n=q_{K_n,R_n,T_n,\eta_n},
  \qquad 0\le q_n(\omega)\le K_n,
\end{equation*}
and
\begin{equation*}
  q_n(\omega)=q_n(\omega')
  \quad\text{whenever}\quad
  \omega|_{[-R_n-1,-\eta_n/2]}
  =\omega'|_{[-R_n-1,-\eta_n/2]}.
\end{equation*}
Thus, $K_n$ is a uniform bound for $q_n$, while $R_n+1$ is its maximal
past-window length.
If $t>R_n+1$, the actual and periodized histories coincide on this window
for $R_n+1\le s\le t$.  Hence,
\begin{equation*}
  \abs{\frac1t\int_0^t q_n(X_{s-})\dd s
       -\int_\Omega q_n\dd\mathcal R_{t,N}}
  \le\frac{2K_n(R_n+1)}{t}.
\end{equation*}
First let $t\to\infty$, then $n\to\infty$, and use
\eqref{eq:q-exponentially-good}.  The triangle inequality proves
\eqref{eq:remove-periodization}.
\end{proof}

\subsection{Bracket concentration}

The entropy rate has a unique zero.  Indeed, $\mathsf H(Q)=0$ implies that the
conditional intensity under $Q$ agrees with the nonlinear Hawkes intensity.
By uniqueness of the stationary dynamics under \eqref{eq:subcriticality}, this
forces $Q=\pi$.  Consequently, compactness of entropy level sets and
\Cref{prop:bracket-admissible} imply
\begin{equation}\label{eq:positive-gap}
  c_\delta:=
  \inf\left\{\mathsf H(Q):
  \abs{\mathsf Q(Q)-\mathsf Q(\pi)}\ge\delta\right\}>0,
  \qquad \delta>0.
\end{equation}

The positive entropy gap in \eqref{eq:positive-gap} assigns a strictly
positive large-deviation cost to every fixed departure of the empirical
bracket functional from its stationary value.  Combining this gap with the
exponential removal of periodization gives the uniform bracket stabilization
required by the martingale MDP as stated in the following proposition.

\begin{proposition}[Bracket concentration]\label{prop:bracket-concentration}
Let
\begin{equation}\label{eq:sigma-q}
  \sigma_g^2=\mathsf Q(\pi)=\mathbb{E}_\pi[q(X_{0-})].
\end{equation}
For every $T,\delta>0$,
\begin{equation}\label{eq:bracket-uniform-concentration}
  \limsup_{t\to\infty}\frac1t\log\mathbb{P}_\varnothing\left(
    \sup_{0\le u\le T}
    \abs{\frac1t\bracket{\widetilde M}_{tu}-u\sigma_g^2}>\delta
  \right)<0.
\end{equation}
\end{proposition}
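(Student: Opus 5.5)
The plan is to first prove a one-time concentration estimate at a fixed horizon, and then upgrade it to a uniform-in-$u$ statement using monotonicity of the bracket and a finite grid. Since $q\ge0$, the map $u\mapsto\bracket{\widetilde M}_{tu}=\int_0^{tu}q(X_{s-})\dd s$ is nondecreasing, and $u\mapsto u\sigma_g^2$ is continuous and increasing. Fix a grid $0=u_0<u_1<\dots<u_m=T$ with mesh at most $\delta/(4\sigma_g^2)$. On the event where $\abs{t^{-1}\bracket{\widetilde M}_{tu_j}-u_j\sigma_g^2}\le\delta/2$ for every $j$, monotonicity sandwiches $t^{-1}\bracket{\widetilde M}_{tu}$ between neighbouring grid values for every $u\in[u_{j-1},u_j]$, so the uniform deviation is at most $\delta$. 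Since $m$ is finite, a union bound reduces \eqref{eq:bracket-uniform-concentration} to showing, for each fixed $v>0$ and $\delta>0$,
\begin{equation*}
  \limsup_{t\to\infty}\frac1t\log\mathbb P_\varnothing\left(\abs{\frac1t\int_0^{tv}q(X_{s-})\dd s-v\sigma_g^2}>\delta\right)<0 .
\end{equation*}

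By the change of variables $t'=tv$, this fixed-horizon estimate is the case $v=1$ at time $t'$, with $\delta$ replaced by $\delta/v$. The rate then changes only by the factor $v>0$, which preserves strict negativity. So it suffices to treat $v=1$. Split the event as
\begin{equation*}
  \Bigl\{\bigl|t^{-1}\textstyle\int_0^t q(X_{s-})\dd s-\mathsf Q(\mathcal R_{t,N})\bigr|>\delta/2\Bigr\}\cup\bigl\{\abs{\mathsf Q(\mathcal R_{t,N})-\mathsf Q(\pi)}\ge\delta/2\bigr\}.
\end{equation*}
\Cref{lem:remove-periodization} makes the first event superexponentially small at speed $t$. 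For the second event, the plan is to apply the Level-3 upper bound \eqref{eq:empty-level3-upper} to the set $C_\delta:=\{Q\in\cM_S:\abs{\mathsf Q(Q)-\mathsf Q(\pi)}\ge\delta/2\}$. Granting this, we obtain the exponent $-c_{\delta/2}<0$ from \eqref{eq:positive-gap}. Since $\log$ of a sum of two terms is at most $\log 2$ plus the maximum of the logs, the $\limsup$ is at most $-c_{\delta/2}<0$.

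The main obstacle is that $C_\delta$ need not be closed in $\cM_S$. \Cref{prop:bracket-admissible} gives continuity of $\mathsf Q$ only on entropy level sets, not on all of $\cM_S$. I would handle this with the approximating local observables. Fix $a>c_{\delta/2}$ and choose $n$ so large that $\sup_{\mathsf H(Q)\le a}\int\abs{q-q_n}\dd Q\le\delta/8$. Then consider the closed set
\begin{equation*}
  C'=\Bigl\{Q:\bigl|\textstyle\int q_n\dd Q-\mathsf Q(\pi)\bigr|\ge\delta/4\Bigr\},
\end{equation*}
which is closed because $q_n$ is bounded and continuous. On the event $\mathcal R_{t,N}\in C_\delta$, either $\mathcal R_{t,N}\in C'$, or $\int\abs{q-q_n}\dd\mathcal R_{t,N}>\delta/4$.

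The second alternative is superexponentially small once $n$ is large, by \eqref{eq:q-exponentially-good} from the proof of \Cref{lem:remove-periodization}; enlarging $n$ only helps the first choice. For the first alternative, \eqref{eq:empty-level3-upper} bounds the exponent by $-\inf_{C'}\mathsf H$. This infimum is at least $\min(a,c_{\delta/8})$. Indeed, if $Q\in C'$ and $\mathsf H(Q)\le a$, then $\abs{\mathsf Q(Q)-\mathsf Q(\pi)}\ge\delta/4-\delta/8=\delta/8$, so $\mathsf H(Q)\ge c_{\delta/8}>0$. Hence the exponent is strictly negative, which completes the reduction.
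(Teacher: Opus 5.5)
Your proposal is correct and follows essentially the paper's argument. You apply the Level-3 upper bound to a closed set defined through the bounded continuous local observable $q_n$. The entropy gap \eqref{eq:positive-gap} on a level set, \eqref{eq:q-exponentially-good} and \Cref{lem:remove-periodization} handle the approximation and periodization errors, and a finite grid with monotonicity of the bracket gives uniformity in $u$; the only differences (doing the grid reduction first, an arbitrary level $a$, thresholds $\delta/4$ and $\delta/8$) are cosmetic.
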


\begin{proof}
We first prove the one-time estimate.  Fix $\delta>0$, let
\begin{equation*}
  \gamma_\delta:=\min\left\{1,c_{\delta/2}\right\}>0,
  \qquad a_\delta:=2\gamma_\delta,
\end{equation*}
and choose $n$ so large that \eqref{eq:q-approx} gives
\begin{equation}\label{eq:q-levelset-choice}
  \sup_{\mathsf H(Q)\le a_\delta}
  \abs{\mathsf Q(Q)-\int_\Omega q_n\dd Q}<\frac\delta4.
\end{equation}
The set
\begin{equation*}
  C_n
  =\left\{Q:\abs{\int_\Omega q_n\dd Q-\sigma_g^2}
        \ge\frac{3\delta}{4}\right\}
\end{equation*}
is closed in the strengthened topology.  If $Q\in C_n$ and
$\mathsf H(Q)\le a_\delta$, then \eqref{eq:q-levelset-choice} implies
$\abs{\mathsf Q(Q)-\sigma_g^2}\ge\delta/2$, and hence
$\mathsf H(Q)\ge c_{\delta/2}\ge\gamma_\delta$.  If
$\mathsf H(Q)>a_\delta$, the same lower bound is immediate.  Therefore
\begin{equation}\label{eq:local-observable-gap}
  \inf_{Q\in C_n}\mathsf H(Q)\ge\gamma_\delta.
\end{equation}
By \eqref{eq:empty-level3-upper},
\begin{equation*}
\limsup_{t\to\infty}\frac1t\log\mathbb{P}_\varnothing\left(
    \abs{\int_\Omega q_n\dd\mathcal R_{t,N}-\sigma_g^2}
       \ge\frac{3\delta}{4}\right)
  \le-\gamma_\delta.
\end{equation*}
Taking the parameters farther along the same approximation net if necessary,
\eqref{eq:q-exponentially-good} makes the probability that
$\int_\Omega\abs{q-q_n}\dd\mathcal R_{t,N}>\delta/4$ decay with any
prescribed exponential rate.  The union bound therefore gives
\begin{equation}\label{eq:periodized-one-time-bracket}
\limsup_{t\to\infty}\frac1t\log\mathbb{P}_\varnothing\left(
    \abs{\mathsf Q(\mathcal R_{t,N})-\sigma_g^2}>\delta
  \right)\le-\gamma_\delta.
\end{equation}
Apply \eqref{eq:periodized-one-time-bracket} with $\delta/2$, and use
\Cref{lem:remove-periodization} with the other half of the error.  We obtain
$\kappa_\delta=\gamma_{\delta/2}>0$ such that
\begin{equation}\label{eq:one-time-bracket}
  \limsup_{t\to\infty}\frac1t\log\mathbb{P}_\varnothing\left(
    \abs{\frac1t\int_0^t q(X_{s-})\dd s-\sigma_g^2}>\delta
  \right)\le-\kappa_\delta<0.
\end{equation}

Next, let us define
\begin{equation*}
  A_t(u):=\frac1t\bracket{\widetilde M}_{tu},
  \qquad 0\le u\le T.
\end{equation*}
Choose a finite grid
\begin{equation*}
  0=u_0<u_1<\cdots<u_m=T,
  \qquad
  \max_{0\le j<m}(u_{j+1}-u_j)\le\eta,
\end{equation*}
where $\eta\sigma_g^2<\delta/4$.  Since $A_t$ is nondecreasing, for
$u_j\le u\le u_{j+1}$,
\begin{align*}
  A_t(u)-u\sigma_g^2
  &\le
  A_t(u_{j+1})-u_{j+1}\sigma_g^2
  +(u_{j+1}-u)\sigma_g^2,\\
  u\sigma_g^2-A_t(u)
  &\le
  u_j\sigma_g^2-A_t(u_j)
  +(u-u_j)\sigma_g^2.
\end{align*}
It follows that
\begin{equation*}
  \bigcap_{j=1}^m
  \left\{
    \abs{A_t(u_j)-u_j\sigma_g^2}\le\frac\delta2
  \right\}
  \subseteq
  \left\{
    \sup_{0\le u\le T}
    \abs{A_t(u)-u\sigma_g^2}<\delta
  \right\}.
\end{equation*}
Therefore,
\begin{equation*}
  \mathbb P_\varnothing\left(
    \sup_{0\le u\le T}
    \abs{A_t(u)-u\sigma_g^2}>\delta
  \right)
  \le
  \sum_{j=1}^m
  \mathbb P_\varnothing\left(
    \abs{A_t(u_j)-u_j\sigma_g^2}>\frac\delta2
  \right).
\end{equation*}
For every $u_j>0$,
\begin{equation*}
  A_t(u_j)-u_j\sigma_g^2
  =u_j\left(
    \frac1{tu_j}\bracket{\widetilde M}_{tu_j}-\sigma_g^2
  \right).
\end{equation*}
Applying \eqref{eq:one-time-bracket} with time $tu_j$ and deviation
$\delta/(2u_j)$ gives
\begin{equation*}
  \limsup_{t\to\infty}\frac1t
  \log\mathbb P_\varnothing\left(
    \abs{A_t(u_j)-u_j\sigma_g^2}>\frac\delta2
  \right)
  \le-u_j\kappa_{\delta/(2u_j)}<0.
\end{equation*}
Since the grid is finite, the union bound yields
\begin{equation*}
  \limsup_{t\to\infty}\frac1t
  \log\mathbb P_\varnothing\left(
    \sup_{0\le u\le T}
    \abs{A_t(u)-u\sigma_g^2}>\delta
  \right)
  \le
  -\min_{1\le j\le m}u_j\kappa_{\delta/(2u_j)}<0,
\end{equation*}
which proves \eqref{eq:bracket-uniform-concentration}.
\end{proof}

\subsection{Proof of the main results}\label{sec:proof-mdp}

We first prove the variance formula, thereby identifying $\sigma_g^2$ with
the long-run variance $\sigma^2$ used in the martingale MDP.

\begin{proof}[Proof of \Cref{thm:variance}]
The response bound is \Cref{prop:corrector-bounds}.  The corrector boundary
is negligible on the central limit scale.  Indeed,
\Cref{prop:corrector-bounds} gives
\begin{equation}\label{eq:boundary-L2}
  \sup_{t\ge0}\mathbb{E}_\pi\left[\left(g(X_t)\right)^2\right]<\infty.
\end{equation}
Since $\widetilde M$ is square integrable,
\begin{equation*}
  \mathbb{E}_\pi\left[\widetilde M_t^2\right]
  =\mathbb{E}_\pi\left[\bracket{\widetilde M}_t\right]
  =t\sigma_g^2.
\end{equation*}
It follows from \eqref{eq:main-decomposition}, the Cauchy--Schwarz inequality, and
\eqref{eq:boundary-L2} that
\begin{equation}\label{eq:variance-limit}
\lim_{t\to\infty}\frac1t\Var_\pi(N_t)=\sigma_g^2.
\end{equation}
Thus $\sigma_g^2$ is the asymptotic variance $\sigma^2$; it agrees with the
variance coefficient in the functional CLT in \citep{Zhu2013CLT}.  This proves
\eqref{eq:variance-formula}.  Since $Dg\ge0$,
\begin{equation*}
  \sigma^2
  =\mathbb{E}_\pi\left[
      \lambda_0\left(1+Dg(X_{0-})\right)^2
    \right]
  \ge\mathbb{E}_\pi[\lambda_0]=\mu.
\end{equation*}
The inequality is strict under \eqref{eq:strict-condition}.  If
$h\not\equiv0$ and $\phi$ is strictly increasing on $\R_+$, inserting a point
creates a strictly positive direct intensity difference for a set of future
times of positive Lebesgue measure.  Hence $Dg(X_{0-})>0$ almost surely and
\eqref{eq:strict-condition} holds.
\end{proof}

\begin{proof}[Proof of \Cref{thm:main-mdp}]
We work first under the empty-history law.  By \eqref{eq:jump-bound},
$\widetilde M$ has jumps uniformly bounded by
$(1-\rho)^{-1}$.  By \Cref{prop:bracket-concentration}, its predictable
quadratic variation, after division by $t$, converges to $u\sigma^2$
uniformly on compact time intervals at an exponential rate with speed $t$.
These are precisely the hypotheses of the bounded-jump martingale functional MDP
in \citep[Proposition 1]{Dembo1996}, with $h_t=t$ in the notation of
\citep{Dembo1996} and
\begin{equation*}
  a_t=\frac{t}{b_t^2}.
\end{equation*}
Indeed, \eqref{eq:moderate-scale} is equivalent to $a_t\to0$ and
$ta_t\to\infty$, while \Cref{prop:bracket-concentration} is condition~(5) of
\citep[Proposition 1]{Dembo1996}.  See also \citep{Puhalskii1994,Gao1996}.  Hence,
\begin{equation*}
  \left\{\frac{\widetilde M_{tu}}{b_t}:0\le u\le1\right\}
\end{equation*}
satisfies an LDP with speed $b_t^2/t$ and rate function
\eqref{eq:rate-function} in the uniform topology.  Finally,
\eqref{eq:main-decomposition} and
\Cref{lem:boundary-negligible} show that the martingale and the centered
counting process are exponentially equivalent.  The exponential-equivalence theorem
\citep[Theorem 4.2.13]{DemboZeitouni2010} proves the MDP for the empty-history
process.

It remains to change the initial law.  By
\Cref{lem:stationary-empty-coupling}, the stationary and empty-history
processes can be coupled monotonically so that the total discrepancy
$D_\infty$ in \eqref{eq:total-coupling-discrepancy} has a finite exponential
moment.  Hence, for every
$\varepsilon>0$,
\begin{equation*}
  \mathbb{P}\left(\sup_{0\le u\le1}
    \abs{N^\pi(0,tu]-N^\varnothing(0,tu]}>\varepsilon b_t\right)
  \le Ce^{-c\varepsilon b_t},
\end{equation*}
which is superexponentially small at speed $b_t^2/t$, since
$t/b_t\to\infty$.  The stationary and empty-history centered paths are
exponentially equivalent, and a final application of the
exponential-equivalence theorem \citep[Theorem 4.2.13]{DemboZeitouni2010} proves \Cref{thm:main-mdp}.
\end{proof}

\begin{proof}[Proof of \Cref{cor:empty}]
The empty-history MDP, with the stationary centering $\mu t$, was established
as the first step in the proof of \Cref{thm:main-mdp}.
\end{proof}

\section{Conclusion}\label{sec:conclusion}

We have established a functional moderate deviation principle for
nonlinear Hawkes processes throughout the full range
$\sqrt t\ll b_t\ll t$, where the time $t\rightarrow\infty$.  The argument separates the problem into two parts.
The martingale-corrector reduction uses the subcritical response coupling to
produce a martingale with uniformly bounded jumps and an exponentially
negligible boundary.  The process-level LDP, together with the sublinear growth
of the rate, then gives exponential stabilization of the predictable quadratic
variation.  Coupling the empty-history and stationary processes completes the
passage between the initial laws.
The same corrector identifies the asymptotic variance through the add-one
response and shows that it dominates the stationary mean intensity in the
self-exciting case.  The proof also isolates the role of the assumptions:
Lipschitz subcriticality controls the response and corrector, while
sublinearity is needed for the superexponential high-intensity estimates.

\bibliographystyle{alpha}
\bibliography{bibtex}

\end{document}